\newcommand \R      {\mathbb R}
\newcommand \be     {\begin{equation}}
\newcommand \ee     {\end{equation}}
\newtheorem{theo}{Theorem}[section]
\newtheorem{lemm}[theo]{Lemma}
\newtheorem{prop}[theo]{Proposition}
\theoremstyle{definition}\newtheorem{defi}[theo]{Definition
}
\def\B{\mathcal{B}}
\def\H{\mathbb{H}}
\def\R{\mathbb{R}}
\def\bih{\operatorname{Bih}}
\def\har{\operatorname{Har}}
\def\inf{\operatorname{Infr}}
\def\pol{\operatorname{Pol}}
\def\partialbar{\overline{\partial}}
\def\vc#1{\hspace{-.1ex}\vec{\hspace{.1ex}{#1}}}
\newcommand{\ZerR}[3]{\hspace{.1ex}{}^{\R#3}\hspace{-.5ex}\mbox{Zer}_{#1}^{m}}
\newcommand{\ZerQ}[3]{\hspace{.1ex}{}^{\H#3}\hspace{-.5ex}\mbox{Zer}_{#1}^{m}}
\def\Y{\underline{Y}}
\def\Z{\underline{Z}}
\begin{document}

\begin{center}\Large
 Reduced-quaternion inframonogenic\\ functions   on   the ball 
\end{center}

\vspace{2ex}
\begin{center}
 C.~Álvarez$^{a}$, J.~Morais$^{b}$ and R.~Michael Porter$^{c}$\\[2ex]
\parbox{.8\textwidth}{$^{a}$Departamento de Ingenierías, Universidad Iberoamericana León, Blvd. Jorge Vértiz Campero 1640, Col.~Cañada de Alfaro, León, Guanajuato, Apdo.~Postal 1-26  C.P. 37238 Mexico \\[.5ex] $^{b}$Department of Mathematics, ITAM, R\'io Hondo \#1, Col.~Progreso Tizap\'an, Mexico City, C.P. 01080 Mexico \\[.5ex] $^{c}$Department of Mathematics, CINVESTAV-Quer\'etaro, Libramiento Norponiente \#2000, Fracc.~Real de Juriquilla. Santiago de Quer\'etaro, C.P. 76230 Mexico}
\end{center}

\vspace{2ex}
\begin{center}
  \parbox{.9\textwidth}{Abstract. A function $f$ from a domain in
  $\R^3$ to the quaternions is said to be inframonogenic if
  $\partialbar f\partialbar =0$, where
  $\partialbar = \partial/\partial x_0+ (\partial/\partial
  x_1)e_1+(\partial/\partial x_2) e_2$. All inframonogenic functions
  are biharmonic. In the context of functions $f=f_0+f_1e_1+f_2e_2$
  taking values in the reduced quaternions, we show that the
  homogeneous polynomials of degree $n$ form a subspace of dimension
  $6n+3$. We use them to construct an explicit, computable orthogonal
  basis for the Hilbert space of square-integrable inframonogenic
  functions defined in the ball in $\R^3$.}
\end{center}

\vspace{2ex}
\textbf{Keywords.  }
Inframonogenic function, spherical harmonics, quaternionic analysis, monogenic function, contragenic function.\\[1ex]
{\it MSC Classification Numbers:} Primary 30G35; Secondary 31B30, 33D45, 35G05, 42C30.
 

\section{Introduction}

In diverse contexts, a solution of a Clifford-algebra equation of the
form $\partialbar f =0$ or $f \partialbar=0$ is known as a ``monogenic
function''.  In \cite{MalonekPenaSommen2010,MalonekPenaSommen2011},
the term ``inframonogenic function'' was used for solutions of a
two-sided or ``sandwich'' second order differential equation of the form
\[ \partialbar f \partialbar =0 .\] In the papers cited as well as in
more recent work
\cite{Dinh2021,MorenoAbreuBory2017,SantiestebanBlayaArciga2021}, the
symbol $\partialbar$ refers to the Dirac operator
$\sum_{i=1}^n\partial_ie_i$, where $\partial_i$ denotes
$\partial/\partial x_i$ while $e_i$ are are the units of the Clifford
algebra under consideration. A rather different theory results when
one uses for $\partialbar$ what is sometimes called the generalized
Cauchy-Riemann (or Fueter) operator
$\partial_0 + \sum_{i=1}^n\partial_ie_i$.

Some natural similarities can be found in the theories independently
of whether or not $\partial_0$ is included in the definition of
$\partialbar$: in any event the set of inframonogenic functions
includes all functions which are both left- and right-monogenic and is
included in the space of biharmonic functions. In either case, a
scalar-valued ``inframonogenic'' function takes a simple form. There
are also some important differences; in particular, the formulation
given in \cite{MalonekPenaSommen2010,MalonekPenaSommen2011} does not
admit separate notions of inframonogenic and of ``antiinframonogenic''
(i.e., solutions of the companion second order differential equation
$\partial f \partial =0$, where $\partial$ denotes the conjugate
Cauchy-Riemann operator). Further, the Cauchy-Riemann version of the operator $\partialbar (\cdot) \partialbar$ preserves paravectors, and the 
relation of $\partial_0$ to $\partial$ for monogenic functions gives this theory a special flavor.

To initiate the study of Cauchy-Riemann inframonogenics, we will work
here with reduced quaternions, that is, functions of three real
variables taking values in the subspace of real codimension 1 generated by the
units $e_0$, $e_1$, and $e_2$ in the
even Clifford algebra $\mathcal{C}\ell^{+}_{0,3}$, identified with
the real quaternions $\H$. The operator under consideration is
\[ \partialbar = \partial_0 + \partial_1e_1 + \partial_1e_2. \] Our
approach will be different from the work which has been done in
the context of the Dirac operator. In the interests of computability, our focus will be on the real vector
subspaces of inframonogenic homogeneous polynomials of given degree
$n$, which we show to be of dimension $6n + 3$. Then we construct an
orthogonal basis for square-integrable inframonogenic functions in the
ball in $\R^3$. We give the norms of the basis elements, which permits the computation of
the associated Fourier-type coefficients. It may be noted that since the collection of inframonogenic
functions is not preserved under the operation of multiplication by a
quaternionic constant either on the left or on the right, there is no apparent
way to construct directly from this a basis for inframonogenic functions taking
values in full quaternions.

\section{Inframonogenic functions}

Let
$\H=\{x = x_0e_0 + x_1e_1 + x_2e_2+x_3e_3: x_i \in \R, \, i=0,1,2,3\}$
denote the collection of real quaternions, where $e_0=1$ and
$e_1, e_2, e_3$ are the quaternionic imaginary units satisfying the
multiplication rules $e_1^2 = e_2^2 = e_3^2 = e_1 e_2 e_3 = -1$. One
usually writes $\overline{x}$, $|x|$ for the conjugate and absolute
value operations on $\H$ as in
\cite{GurlebeckSprossig1989,GurlebeckSprossig1997,Kravchenko2003,MoraisGeorgievSproessig2014}. We
identify the Euclidean space $\R^3=\{x = (x_0,x_1,x_2)\}$ with the
real vector subspace of reduced quaternions
 \[   \{x =   x_0e_0 + x_1e_1 + x_2e_2\}\subseteq\H,\]
i.e.\ with vanishing $e_3$ term.
We use the common notation
\begin{align*}
 \partial_i &= \partial/\partial x_i, \quad
\vc\partial = \partial_1e_1 + \partial_2e_2, \quad
 \partial = \partial_0  - \vc\partial, \quad
 \partialbar = \partial_0  + \vc\partial, 
\end{align*}
for first order differential operators on functions
$f\colon\Omega\to\R^3$ on any domain $\Omega\subseteq\R^3$, with
$f=\sum_0^2f_ie_i=f_0+\vc f$ where $f_i\colon\Omega\to\R$.  Some
properties of these operators are given in 
\ref{sec:formulas}. As in
\cite{GuerlebeckHabethaSproessig2008,GuerlebeckHabethaSproessig2016},
  $f$ is called \textit{monogenic} when $\partialbar f= 0$. For
$\R^3$-valued functions this is the same as $f\partialbar= 0$ since
$-e_3 (\partialbar f)e_3 = \overline{f\partialbar}$
(cf. \cite{MoraisHabilitation2021}). Similarly, $f$ is
\textit{antimonogenic} when $\partial f= 0$. Monogenic and
antimonogenic functions are harmonic.

\begin{defi} We say that  
$f$ is \textit{inframonogenic} if $\partialbar f\partialbar = 0$ while 
$f$ is \textit{antiinframonogenic} if $\partial f\partial = 0$.
\end{defi}

When working with symbolic operators, one should take care since
$\partialbar(fg)\partialbar$ is not the same as
$(\partialbar f)(g\partialbar)$ despite the fact that multiplication
in $\H$ is associative; in fact, they are second and first order operators, respectively. Similarly, $f\partialbar g$ has no
clear meaning without parentheses.
Clearly, $f$ is inframonogenic if and only if $\overline{f}$ is
antiinframonogenic. If $f$ is monogenic or antimonogenic, then $f$ is
also inframonogenic.  We have the Laplace operator  
\[ \Delta_3 = \partial_0^2 + \partial_1^2 +\partial_2^2; \] one calls
$f$ \textit{biharmonic} when $\Delta_3 f$ is harmonic, i.e.\
$(\Delta_3)^2f=0$. Inframonogenic functions bear a similar relation to
biharmonic functions as monogenic functions bear to harmonic
functions.

\begin{prop}\label{prop:partialbpartial}
  If \/ $b$ is biharmonic and real-valued, then $\partial b\partial$
  is inframonogenic and $\partialbar b\partialbar$ is
  antiinframonogenic. If $f$ is inframonogenic or antiinframonogenic,
  then every component $f_i$ of $f$ is biharmonic.
 \end{prop}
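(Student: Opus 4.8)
The plan is to reduce the two-sided equations to the scalar biharmonic operator by exploiting the first-order factorization of the Laplacian. The facts I would rely on are the operator identities
\[ \partial\partialbar = \partialbar\partial = \Delta_3, \]
valid both for the left action $f\mapsto\partialbar f$ and for the right action $f\mapsto f\partialbar$ (cf.\ the elementary formulas of Section~\ref{sec:formulas}); each follows because the mixed first-order terms cancel while the Clifford relations $e_ie_j+e_je_i=-2\delta_{ij}$ collapse the second-order part to $\partial_0^2+\partial_1^2+\partial_2^2$. The second ingredient is that a left-multiplication operator and a right-multiplication operator always commute, since $e_i(he_j)=(e_ih)e_j$ by associativity; in particular $\Delta_3$, being a real scalar operator, commutes with both actions and operates componentwise on $f=f_0+f_1e_1+f_2e_2$.

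For the first assertion I would read $\partialbar(\partial b\partial)\partialbar$ as a composite of four single-sided operators: two acting on the left ($\partial$ innermost, $\partialbar$ outermost) and two acting on the right (again $\partial$ innermost, $\partialbar$ outermost). Grouping the two left actions and the two right actions separately---legitimate by the commutativity just noted---turns the left pair into $\partialbar\partial=\Delta_3$ and the right pair likewise into $\Delta_3$, whence
\[ \partialbar(\partial b\partial)\partialbar=\Delta_3\big(\Delta_3 b\big)=(\Delta_3)^2b=0 \]
because $b$ is biharmonic; thus $\partial b\partial$ is inframonogenic. Interchanging the roles of $\partial$ and $\partialbar$ and using $\partial\partialbar=\Delta_3$ in place of $\partialbar\partial$ gives $\partial(\partialbar b\partialbar)\partial=(\Delta_3)^2b=0$, so $\partialbar b\partialbar$ is antiinframonogenic.

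For the converse, suppose $\partialbar f\partialbar=0$. I would apply the complementary operator $\partial$ on both the left and the right. By the same regrouping, the outer-left $\partial$ combines with the inner-left $\partialbar$ to give $\partial\partialbar=\Delta_3$, and likewise on the right, so that
\[ 0=\partial\big(\partialbar f\partialbar\big)\partial=(\Delta_3)^2f. \]
Since $(\Delta_3)^2$ is scalar it acts componentwise, and the $\R$-linear independence of $1,e_1,e_2$ forces $(\Delta_3)^2f_i=0$ for each $i$; that is, every $f_i$ is biharmonic. The antiinframonogenic case is symmetric: applying $\partialbar$ on both sides of $\partial f\partial=0$ again yields $(\Delta_3)^2f=0$.

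Two points deserve care. First, the order of composition in the two-sided expressions must be tracked closely---one has to confirm that, for instance, the outer left $\partialbar$ genuinely composes with the inner left $\partial$ as $\partialbar\partial$ and not $\partial\partialbar$---and the right-action factorizations must be checked to carry the same signs as the left-action ones; this bookkeeping is the only real obstacle. Second, I would verify that the hypothesis that $b$ is real is exactly what keeps $\partial b\partial$ inside the reduced quaternions: a short computation shows its $e_3$-component equals $\partial_1\partial_2 b-\partial_2\partial_1 b$, which vanishes by equality of mixed partials, so that $\partial b\partial$ is an admissible ($\R^3$-valued) candidate for an inframonogenic function. Once these identities are in hand the proposition follows at once.
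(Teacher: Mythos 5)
Your proof is correct and is precisely the argument the paper leaves implicit: Proposition~\ref{prop:partialbpartial} is stated there without proof, the intended justification being exactly your factorization $\partial\partialbar=\partialbar\partial=\Delta_3$ (valid for both the left and the right actions, with the mixed terms cancelling) together with the commutation of left- and right-acting operators via associativity. Your closing verification that $\partial b\partial$ remains $\R^3$-valued for real $b$ is sound and is subsumed by the more general Proposition~\ref{prop:R3conserved} of the paper.
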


In particular, every inframonogenic function is real-analytic \cite{ACL1983}.
A fundamental property of the two-sided operator $\partial(\cdot)\partial$ is that it respects the absence of the $e_3$ term: 

\begin{prop}\label{prop:R3conserved} If $f\colon\Omega\to\R^3$, then
  $\partial f\partial,\,\partialbar f\partialbar\colon\Omega\to\R^3$ as well.
\end{prop}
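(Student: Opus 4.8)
The plan is to show that the only component of $\partialbar f\partialbar$ (and of $\partial f\partial$) that could leave $\R^3$ is the $e_3$-component, and that this component vanishes identically. Since $e_1e_2=e_3=-e_2e_1$, products of reduced quaternions do in general acquire an $e_3$-term, so the assertion is a genuine cancellation that has to be read off from the two-sided, second-order structure of the operator rather than from any closure property of $\R^3$ under multiplication.

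First I would expand $g:=\partialbar f$ using $f=f_0+f_1e_1+f_2e_2$ and the multiplication table, writing $g=A+Be_1+Ce_2+De_3$ with real coefficients $A=\partial_0 f_0-\partial_1 f_1-\partial_2 f_2$, $B=\partial_0 f_1+\partial_1 f_0$, $C=\partial_0 f_2+\partial_2 f_0$, and the offending term $D=\partial_1 f_2-\partial_2 f_1$. Applying $\partialbar$ on the right, $g\partialbar=\partial_0 g+(\partial_1 g)e_1+(\partial_2 g)e_2$, and keeping only the $e_3$-part (using $e_2e_1=-e_3$ and $e_1e_2=e_3$), the $e_3$-coefficient of $\partialbar f\partialbar$ comes out as $\partial_0 D-\partial_1 C+\partial_2 B$.

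Substituting the expressions for $B$, $C$, $D$ reduces this $e_3$-coefficient to
\[ (\partial_0\partial_1-\partial_1\partial_0)f_2+(\partial_2\partial_0-\partial_0\partial_2)f_1+(\partial_2\partial_1-\partial_1\partial_2)f_0, \]
a sum of commutators of partial derivatives, each of which is zero by the equality of mixed second partials (the operator is second order, so $f$ is taken to be $C^2$, which suffices). Hence $\partialbar f\partialbar$ carries no $e_3$-term, i.e.\ $\partialbar f\partialbar\colon\Omega\to\R^3$.

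For $\partial f\partial$ I would run the identical computation with $\vc\partial$ replaced by $-\vc\partial$; the extra signs leave the $e_3$-coefficient an antisymmetric combination of second derivatives, so it again vanishes. Alternatively, one may use the operator identity $\overline{\partial f\partial}=\partialbar\,\overline{f}\,\partialbar$ (conjugation reverses order and interchanges $\partial$ with $\partialbar$) together with the invariance of $\R^3$ under conjugation, to deduce the $\partial f\partial$ case from the $\partialbar f\partialbar$ case by applying the latter to $\overline{f}\in\R^3$. The one delicate point—and the natural place for sign errors—is the bookkeeping of the noncommutative relations $e_1e_2=e_3$, $e_2e_1=-e_3$, $e_3e_1=e_2$, $e_3e_2=-e_1$ throughout the right multiplication, so that the $e_3$-coefficient emerges precisely as the antisymmetric form that Schwarz's theorem annihilates.
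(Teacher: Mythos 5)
Your proof is correct. It takes a genuinely different route from the paper's, which is a one-line appeal to the appendix: the paper splits $f=f_0+\vc f$ and $\partialbar=\partial_0+\vc\partial$, cites Proposition \ref{prop:bilateral} for the fact that the purely vectorial pieces $\vc\partial f_0\vc\partial$ and $\vc\partial\vc f\vc\partial$ are $\R^3$-valued (indeed $\vc\partial\vc f+\vc f\vc\partial$ is scalar and $\vc\partial\vc f\vc\partial$ has only $e_1,e_2$ components), and observes that the remaining terms $\partial_0^2 f$, $\partial_0\vc\partial f_0$, $\partial_0(\vc\partial\vc f+\vc f\vc\partial)$ obviously stay in $\R^3$; the $e_3$-cancellation is thus buried inside precomputed bilateral formulas. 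You instead track the one dangerous coefficient through both applications of $\partialbar$: your intermediate components $A,B,C,D$ are right, the $e_3$-coefficient of $\partialbar f\partialbar$ is indeed $\partial_0 D-\partial_1 C+\partial_2 B$, and its reduction to a sum of commutators of partials annihilated by Schwarz's theorem is exact. This buys something the paper's proof does not make visible: the obstruction is precisely a curl-type antisymmetric form in the second derivatives, and $C^2$ regularity suffices (the paper's framework implicitly assumes smoothness). Your alternative derivation of the $\partial f\partial$ case from the identity $\overline{\partial f\partial}=\partialbar\,\overline{f}\,\partialbar$ is also valid and is exactly the mechanism behind the paper's earlier remark that $f$ is inframonogenic if and only if $\overline{f}$ is antiinframonogenic. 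What the paper's route buys in exchange is economy: Proposition \ref{prop:bilateral} is needed anyway for later arguments (e.g., Theorem \ref{prop:diminfr}), so the proposition becomes an immediate corollary rather than a fresh computation.
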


\begin{proof}
  By Proposition \ref{prop:bilateral},   $\vc\partial f_0\vc\partial$ and
  $\vc\partial \vc f\vc\partial$ take values  in $\R^3$, and obviously so do
  $\partial_0 f_0\vc\partial$ and $\vc\partial \vc f\vc\partial$.
\end{proof}

 The following is analogous to the often-used fact that $\partial f=2f_0$ for monogenic $f$.
\begin{prop} If $f$ is monogenic, then $\partial f+f\partial=4\partial_0f$.
\end{prop}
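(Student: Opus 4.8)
The plan is to reduce everything to the first-order relations that monogenicity imposes, being careful to distinguish left and right Clifford multiplication. First I would split each operator through its scalar and vector parts. Since $\partial=\partial_0-\vc\partial$ and the scalar operator $\partial_0$ acts the same whether written on the left or on the right of $f$, the two copies of the $\partial_0 f$ term combine, and I obtain
\[ \partial f + f\partial = 2\partial_0 f - \vc\partial f - f\vc\partial . \]
Here $\vc\partial f = e_1(\partial_1 f)+e_2(\partial_2 f)$ denotes left multiplication by the units while $f\vc\partial=(\partial_1 f)e_1+(\partial_2 f)e_2$ denotes right multiplication; the whole point of the computation is that these two expressions need not coincide in $\H$.

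Next I would invoke the hypothesis. Monogenicity $\partialbar f=0$ says exactly $\partial_0 f + \vc\partial f=0$, i.e.\ $\vc\partial f=-\partial_0 f$. For the right-hand product I would use the observation recorded just above in the text, that for $\R^3$-valued $f$ one has $\partialbar f=0$ if and only if $f\partialbar=0$ (via $-e_3(\partialbar f)e_3=\overline{f\partialbar}$); the latter reads $\partial_0 f + f\vc\partial=0$, i.e.\ $f\vc\partial=-\partial_0 f$ as well. Substituting both relations into the displayed identity yields $2\partial_0 f + \partial_0 f + \partial_0 f=4\partial_0 f$, which is the claim.

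The one step I expect to require genuine care is the right-sided product $f\vc\partial$: one must not silently assume $\vc\partial f=f\vc\partial$, and the clean way to control it is precisely the two-sided monogenicity of $\R^3$-valued functions quoted above. As a fallback that avoids any appeal to that equivalence, I could instead expand $f=f_0+f_1e_1+f_2e_2$ and verify the identity component by component, using the Riesz-type system $\partial_0 f_0=\partial_1 f_1+\partial_2 f_2$, $\partial_0 f_i=-\partial_i f_0$ for $i=1,2$, and $\partial_1 f_2=\partial_2 f_1$, which $\partialbar f=0$ produces. Each of the $e_0,e_1,e_2$ components of $\partial f+f\partial$ then collapses to $4\partial_0 f_i$, while the spurious $e_3$ term cancels by the last relation. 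This route is more mechanical but independently confirms the same conclusion.
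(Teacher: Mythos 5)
Your proof is correct. It takes a recognizably different route from the paper's: the paper decomposes both the operators and the function, writing out $2\partialbar f$, $\partial f$, and $f\partial$ in terms of the four pieces $\partial_0 f_0$, $\partial_0\vc f$, $\vc\partial f_0$, and $\vc\partial\vc f$ (resp.\ $\vc f\vc\partial$), and then adds the three lines so that the hypothesis $\partialbar f=0$ absorbs the unwanted terms; implicitly this also uses the vanishing of the $e_3$-part of $\partialbar f$ to reconcile $\vc\partial\vc f$ with $\vc f\vc\partial$. You instead leave $f$ intact, split only the operators as $\partial=\partial_0-\vc\partial$, and control the right-hand action by invoking the two-sided monogenicity of $\R^3$-valued functions ($\partialbar f=0$ iff $f\partialbar=0$, via $-e_3(\partialbar f)e_3=\overline{f\partialbar}$), which the paper states just before the definition of inframonogenic functions; this yields $\vc\partial f=f\vc\partial=-\partial_0 f$ and the claim follows by substitution. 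Your approach buys something the paper's does not: it proves the two stronger identities $\partial f=2\partial_0 f$ and $f\partial=2\partial_0 f$ separately, of which the proposition is merely the sum, and it isolates exactly which structural fact governs the right action. The paper's computation, in exchange, is self-contained at the component level and does not lean on the two-sided equivalence as a black box (though the $e_3$-component of monogenicity, which underlies that equivalence, is silently used there too). One small remark on your fallback argument: in the sum $\partial f+f\partial$ the $e_3$-terms cancel identically for \emph{any} $\R^3$-valued $f$ (this is the last identity of Proposition \ref{prop:bilateralderivsimple}), so the relation $\partial_1 f_2=\partial_2 f_1$ is not actually needed at that point — it is, however, genuinely used in your main route, where it is hidden inside $f\partialbar=0$.
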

\begin{proof} First observe that
  \begin{align*}
    2\partialbar f &= 2(\partial_0f_0+\partial_0\vc f
               + \vc\partial f_0   + \vc\partial\vc f) = 0, \\
    \partial f &= \partial_0f_0+\partial_0\vc f
               - \vc\partial f_0  - \vc\partial\vc f , \\    
    f  \partial  &= \partial_0f_0+\partial_0\vc f
               - \vc\partial f_0  - \vc f\vc\partial  .
  \end{align*}
  Then add.  
\end{proof}

The following fact shows that scalar-valued inframonogenic
functions in and of themselves are not particularly interesting.
\begin{prop}
  Let $f_0\colon\Omega\to\R$ be an inframonogenic scalar-valued
  function. Then (locally) one has
\[ f(x_0,x_1,x_2) = c_0(2x_0^2+x_1^2+x_2^2)+c_1x_0 + c_2+h(x_1,x_2),
\]
where $c_i\in\R$ are constants and $h$ is a harmonic function.
\end{prop}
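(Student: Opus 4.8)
The plan is to expand the two-sided operator acting on a scalar function into its reduced-quaternion components, read off the resulting system of scalar PDEs, and then integrate that system by separation of variables.

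First I would compute $\partialbar f_0\partialbar$ directly. Writing $g=\partialbar f_0=\partial_0 f_0+(\partial_1 f_0)e_1+(\partial_2 f_0)e_2$ and then forming $g\partialbar$ with the multiplication rules $e_1^2=e_2^2=-1$, $e_1e_2=e_3=-e_2e_1$, I expect the $e_0$, $e_1$, $e_2$, $e_3$ components to be, respectively,
\[ \partial_0^2 f_0-\partial_1^2 f_0-\partial_2^2 f_0,\qquad 2\partial_0\partial_1 f_0,\qquad 2\partial_0\partial_2 f_0,\qquad \partial_2\partial_1 f_0-\partial_1\partial_2 f_0. \]
The $e_3$ component vanishes by equality of mixed partials (the function is smooth, indeed real-analytic, by the remark following Proposition \ref{prop:partialbpartial}), which is consistent with Proposition \ref{prop:R3conserved}. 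Hence inframonogenicity of the scalar $f_0$ is equivalent to the system
\[ \partial_0\partial_1 f_0=0,\qquad \partial_0\partial_2 f_0=0,\qquad \partial_0^2 f_0=\partial_1^2 f_0+\partial_2^2 f_0. \]

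Next I would exploit the two mixed equations, which say that $\partial_0 f_0$ is annihilated by both $\partial_1$ and $\partial_2$; thus $\partial_0 f_0$ depends on $x_0$ alone, and integrating locally gives a decomposition $f_0=\Phi(x_0)+\psi(x_1,x_2)$. Substituting this into the third equation yields $\Phi''(x_0)=(\partial_1^2+\partial_2^2)\psi(x_1,x_2)$. Since the left side depends only on $x_0$ and the right side only on $(x_1,x_2)$, both must equal a common constant, which I will write as $4c_0$. Solving the two decoupled equations then finishes the proof: from $\Phi''=4c_0$ I get $\Phi(x_0)=2c_0x_0^2+c_1x_0+c_2$, while from $(\partial_1^2+\partial_2^2)\psi=4c_0$ the function $h:=\psi-c_0(x_1^2+x_2^2)$ satisfies $(\partial_1^2+\partial_2^2)h=0$, i.e.\ $h$ is harmonic in $(x_1,x_2)$. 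Collecting terms gives $f_0=c_0(2x_0^2+x_1^2+x_2^2)+c_1x_0+c_2+h(x_1,x_2)$, as claimed.

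The only genuinely delicate point is the first step: one must carry out the two-sided quaternionic multiplication carefully, tracking the signs produced by $e_2e_1=-e_3$ and $e_1e_2=e_3$, since a sign error there would alter the PDE system. Everything afterward is elementary separation of variables, and the word \emph{locally} in the statement is precisely what justifies the integration step $\partial_0 f_0=\phi(x_0)\Rightarrow f_0=\Phi(x_0)+\psi(x_1,x_2)$ on a product neighborhood, without imposing any global topological hypothesis on $\Omega$.
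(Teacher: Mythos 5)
Your proposal is correct and follows essentially the same route as the paper: both reduce $\partialbar f_0\partialbar=0$ to the system $\partial_0\partial_1 f_0=\partial_0\partial_2 f_0=0$, $\partial_0^2 f_0=(\partial_1^2+\partial_2^2)f_0$ (the paper via the observation that $\partialbar f_0\partialbar=\partialbar^2 f_0$ for scalar $f_0$, you via a direct component expansion whose signs, including the vanishing $e_3$ term, check out), and then integrate by the identical separation-of-variables argument with separation constant $4c_0$. No gaps; your explicit appeal to ``locally'' for the decomposition $f_0=\Phi(x_0)+\psi(x_1,x_2)$ is a point the paper leaves implicit.
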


\begin{proof}
$\partialbar f_0\partialbar=0$ says $\partialbar^2f_0=0$ that
is equivalent to
\[ (\partial_0^2-\partial_1^2-\partial_2^2)f_0=0,
  \quad  \partial_0\vc\partial f_0=0.
\]
The second equation is the system
$\partial_0\partial_1f_0=\partial_0\partial_2f_0=0$, which says that
$\partial_0f_0$ is independent of $x_1,x_2$ while $\partial_1f_0$,
$\partial_2f_0$ are independent of $x_0$:
\[ \partial_0f_0 = a_0(x_0), \quad \partial_1f_0=a_1(x_1,x_2),\quad 
  \partial_2f_0=a_2(x_1,x_2).
\] 
From the first of these equations
\[ f_0(x_0,x_1,x_2) =   a(x_0) +  b(x_1,x_2),
\]
for some $b(x_1,x_2)$ where $a'(x_0)=a_0(x_0)$, and this general
expression for $f_0$ satisfies the latter two. Now
\[ 0= (\partial_0^2-\partial_1^2-\partial_2^2)f_0
  = a''(x_0) - \Delta_2b(x_1,x_2).
\]
This implies $ a''(x_0) = \Delta_2b(x_1,x_2)= 4c_0$ for some
constant $c_0$. From this,
\[ a(x_0) = 2c_0x_0^2 + c_1x_0 + c_2 \]
and
\[ b(x_1,x_2) = c_0(x_1^2+x_2^2) + h(x_1,x_2)
\]
for some harmonic $h$.
\end{proof}

\section{Homogeneous polynomials}

Here we collect the facts we will need about homogeneous polynomials
of three variables. Let $\pol_n$ denote the collection of homogeneous
polynomials of degree $n$ in the variables $x_0,x_1,x_2$.  Let
$\har_n\subseteq\bih_n$ be the subspaces of real-valued harmonic and
biharmonic polynomials.  It is well known  \cite{Mus1968,Sansone1959} that
\begin{align}
  \dim \har_n &= 2n+1, \quad n\ge0, \label{eq:dimharn} \\
  \dim \bih_n &= 4n-2, \quad n\ge2 \label{eq:dimbihn}
\end{align}
while  $\dim \bih_0=1$,  $\dim \bih_1=3$ since all polynomials of
degree 0 or 1 are biharmonic. 

We need the following technical lemma.
\begin{lemm} \label{lemm:x2h} If $h$ (in any domain) is harmonic, then $b(x)=|x|^2h(x)$
is biharmonic.
\end{lemm}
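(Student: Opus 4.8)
The plan is to verify the definition directly: show that $\Delta_3 b$ is itself harmonic, so that $(\Delta_3)^2 b=0$ and hence $b$ is biharmonic. First I would apply the Leibniz rule for the Laplacian of a product, $\Delta_3(uv)=(\Delta_3 u)v+2\nabla u\cdot\nabla v+u\,\Delta_3 v$, with $u=|x|^2=x_0^2+x_1^2+x_2^2$ and $v=h$. Since $\Delta_3|x|^2=6$, $\nabla|x|^2=2(x_0,x_1,x_2)$, and $\Delta_3 h=0$ by hypothesis, this collapses to
\[ \Delta_3 b = 6h + 4\sum_{i=0}^2 x_i\,\partial_i h. \]

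It then remains to show that the right-hand side is harmonic. The term $6h$ is harmonic, so the whole task reduces to proving that the Euler-type expression $Eh:=\sum_i x_i\,\partial_i h$ is harmonic whenever $h$ is. For this I would establish the commutator identity $\Delta_3(Eh)=2\Delta_3 h+E(\Delta_3 h)$ by a direct computation: applying $\partial_j^2$ to the summand $x_i\,\partial_i h$ produces one genuine second-derivative term $x_i\,\partial_j^2\partial_i h$ together with two copies of $\delta_{ij}\,\partial_i\partial_j h$ coming from differentiating the explicit factor $x_i$. Summing over $i$ and $j$ then gives exactly $E(\Delta_3 h)+2\Delta_3 h$. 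With $\Delta_3 h=0$ this yields $\Delta_3(Eh)=0$.

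Combining the two steps, $\Delta_3 b=6h+4Eh$ is a sum of harmonic functions, hence harmonic, so $(\Delta_3)^2 b=0$ as required. There is no real obstacle here beyond bookkeeping; the only point requiring care is the commutator computation, where one must not drop the two $\delta_{ij}$ terms that arise from differentiating $x_i$. Since everything reduces to a finite pointwise differential identity, the argument is valid on any domain and no convergence or boundary considerations enter.
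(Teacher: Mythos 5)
Your proof is correct and takes essentially the same route as the paper: one application of the Leibniz rule $\Delta_3(uv)=(\Delta_3 u)v+2\nabla u\cdot\nabla v+u\Delta_3 v$ reduces the claim to the harmonicity of the Euler expression $\sum_i x_i\,\partial_i h$, which is then checked by a second termwise application (your commutator identity $\Delta_3(Eh)=2\Delta_3 h+E(\Delta_3 h)$ is exactly the paper's computation of $\sum_i\Delta_3(x_i\,\partial_i h)$). Incidentally, your coefficient $4$ on the Euler term is the correct one --- the paper's displayed formula drops the factor $2$ on the cross term, a harmless slip since only the harmonicity of that term matters.
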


\begin{proof} Since
\[ \Delta_3 b = \Delta_3(|x|^2)h+\nabla|x|^2\cdot h + |x|^2\Delta_3 h=
  6h+2x\cdot\nabla h = 6h+2\sum x_i\,\partial_ih,\]
we have
\begin{align*}
\Delta_3\Delta_3 b &= 2\sum \Delta_3(x_i\partial_ih)=
  2\sum(\Delta_3 x_i + \nabla x_i\cdot \nabla\partial_ih + x_i\Delta_3\partial_i h)\\
  &= 2 \sum \partial_i^2h = 0. \qedhere
\end{align*}
\end{proof}

We will use the well-known $2n+1$ solid spherical harmonics
\cite{Andrews1998,Sansone1959} in spherical coordinates
$x_0=\rho\cos\theta$, $x_1=\rho\sin\theta\cos\varphi$,
$x_2=\rho\sin\theta \sin\varphi$:
\begin{align} \label{eq:esphecoor}
  U_{n,m}^+(x) &= \rho^n P_n^m(\cos\theta) \cos (m\varphi),\quad 0\le m\le n,\nonumber\\
  U_{n,m}^-(x) &= \rho^n P_n^m(\cos\theta) \sin (m\varphi),\quad 1\le m\le n,
\end{align}
where $P_n^m$ is the ``associated Legendre polynomial'' of degree $n$
and order $m$ \cite{Hobson1931}.  Recall that $P_n^m(t)$ is a polynomial or
$\sqrt{1-t^2}$ times a polynomial. However, all $U^\pm_{n,m}$ are in
$\pol_n$ and indeed form a basis of $\har_n$. Thus, the $2(n-2)+1=2n-3$
``proper'' (i.e.\ not harmonic) solid spherical biharmonics
\begin{align}
  b_{n,m}^\pm  = |x|^2 U_{n-2,m}^\pm  
\end{align}
which we obtain from Lemma \ref{lemm:x2h} are linearly
independent. The total number of elements of
\begin{align} \label{eq;bihbasis}
  \{U_{n,m}^\pm\}\cup\{b_{n,m}^\pm\} 
\end{align}
is $4n-2$, and they are linearly independent because the factors $\cos (m\varphi)$,
$\sin (m\varphi)$, $\cos (m'\varphi)$, $\sin (m'\varphi)$ are linearly independent
for $m'\not=m$, while when $m'=m$, the factors $P_n^m(\cos\theta)$,
$P_{n-2}^m(\cos\theta)$ are linearly independent. By
\eqref{eq:dimbihn}, we have that \eqref{eq;bihbasis} is a basis for
$\bih_n$ when $n\ge2$. In fact, for $\Omega=B_1(0)=\{x\colon\ |x|<1\}$
it follows from Lemma \ref{lemma_orthogonality_spherical_harmonics} below that \eqref{eq;bihbasis} is an orthogonal set with respect to the
inner product
\begin{align} \label{scalar-inner-product} 
  \langle f,g \rangle =  \int\!\!\int\!\!\int_{B_1(0)}
  (f_0g_0 + f_1g_1 + f_2g_2 ) \,dV  
\end{align}
in the Hilbert space $L^2(B_1(0))$, and by \eqref{eq:dimbihn} is a
basis for $\bih_n \cap L^2(B_1(0))$.

\begin{lemm} \label{lemma_orthogonality_spherical_harmonics}
For arbitrary $k, k' = 0, 1, \dots$, the following relation holds:
\begin{align} \label{L2norms-solid-harmonics-general}
 \langle \rho^{2k} U_{n,m}^{\pm}, \rho^{2k'} U_{n',m'}^{\pm} \rangle 
= \frac{2(1+\delta_{0,m})\pi}{(2(k+k')+(n+n')+3)(2n+1)} \frac{(n+m)!}{(n-m)!} \, \delta_{n,n'} \delta_{m,m'} 
\end{align}
for $n, n' \geq 0$ with $m=0,\ldots,n$ and $m'=0,\ldots,n'$.
\end{lemm}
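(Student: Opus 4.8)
The plan is to compute the integral directly in the spherical coordinates $(\rho,\theta,\varphi)$ already fixed in \eqref{eq:esphecoor}, exploiting the fact that the integrand separates into a product of a radial factor, a $\theta$-factor, and a $\varphi$-factor. Since $U_{n,m}^\pm$ is real and scalar-valued, the inner product \eqref{scalar-inner-product} reduces to the ordinary $L^2(B_1(0))$ pairing of two scalar functions, i.e.\ $\int_{B_1(0)} (\rho^{2k}U_{n,m}^\pm)(\rho^{2k'}U_{n',m'}^\pm)\,dV$. Writing $dV=\rho^2\sin\theta\,d\rho\,d\theta\,d\varphi$ and substituting $\rho^{2k}U_{n,m}^\pm=\rho^{2k+n}P_n^m(\cos\theta)\,T_m(\varphi)$, where $T_m(\varphi)$ is $\cos(m\varphi)$ in the $+$ case and $\sin(m\varphi)$ in the $-$ case, the integrand becomes $\rho^{2(k+k')+n+n'+2}P_n^m(\cos\theta)P_{n'}^{m'}(\cos\theta)\,T_m(\varphi)T_{m'}(\varphi)\sin\theta$, so the triple integral factors completely.

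First I would evaluate the radial integral $\int_0^1\rho^{2(k+k')+n+n'+2}\,d\rho=\bigl(2(k+k')+(n+n')+3\bigr)^{-1}$, which accounts for the first factor in the denominator of \eqref{L2norms-solid-harmonics-general}. Next I would treat the $\varphi$-integral using the elementary orthogonality of the trigonometric system on $[0,2\pi]$: for $m,m'\ge 0$ one has $\int_0^{2\pi}\cos(m\varphi)\cos(m'\varphi)\,d\varphi=\pi(1+\delta_{0,m})\delta_{m,m'}$ and, for $m,m'\ge 1$, $\int_0^{2\pi}\sin(m\varphi)\sin(m'\varphi)\,d\varphi=\pi\delta_{m,m'}$, while the mixed cosine–sine integral vanishes. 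This single step produces the factor $\pi(1+\delta_{0,m})$ together with $\delta_{m,m'}$ (the $-$ case being subsumed since $\delta_{0,m}=0$ once $m\ge 1$), and it simultaneously forces $m=m'$, so no cross terms between the $+$ and $-$ families survive.

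It remains to evaluate the $\theta$-integral. With $m=m'$ enforced by the previous step, the substitution $t=\cos\theta$ turns it into $\int_{-1}^{1}P_n^m(t)P_{n'}^m(t)\,dt$, which is exactly the classical orthogonality relation for associated Legendre functions of fixed order $m$ and varying degree, namely $\int_{-1}^{1}P_n^m(t)P_{n'}^m(t)\,dt=\frac{2}{2n+1}\frac{(n+m)!}{(n-m)!}\,\delta_{n,n'}$ (see \cite{Hobson1931,Andrews1998}). Multiplying the three evaluated factors and using $\delta_{n,n'}$ to rewrite the degree-dependent constant in terms of $n$ alone yields precisely the right-hand side of \eqref{L2norms-solid-harmonics-general}.

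The computation presents no genuine obstacle; the only points requiring care are to invoke the Legendre identity in its correctly normalized form and to observe that one never needs orthogonality across distinct orders $m\neq m'$, since the $\varphi$-integral has already disposed of that case. As a bookkeeping check, the lone factor of $2$ in the numerator of \eqref{L2norms-solid-harmonics-general} originates from the Legendre normalization $\tfrac{2}{2n+1}\tfrac{(n+m)!}{(n-m)!}$, whereas the $(1+\delta_{0,m})\pi$ comes entirely from the $\varphi$-integral.
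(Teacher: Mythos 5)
Your computation is correct and is precisely the ``direct calculation'' the paper leaves implicit (the lemma is stated without proof): separation in spherical coordinates, the radial integral $\bigl(2(k+k')+(n+n')+3\bigr)^{-1}$, the trigonometric orthogonality giving $\pi(1+\delta_{0,m})\delta_{m,m'}$, and the fixed-order Legendre relation $\int_{-1}^{1}P_n^m(t)P_{n'}^m(t)\,dt=\frac{2}{2n+1}\frac{(n+m)!}{(n-m)!}\,\delta_{n,n'}$, whose product matches \eqref{L2norms-solid-harmonics-general} exactly. You also correctly note that the scalar-valuedness of $U^{\pm}_{n,m}$ reduces \eqref{scalar-inner-product} to the ordinary scalar $L^2$ pairing, so nothing further is needed.
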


The following fact will  be useful. The verification is also a direct calculation.
 \begin{lemm} \label{lemm:contrag1}
For $n\ge1$, $\partial_2 U^+_{n,1} =  \partial_1 U^-_{n,1}$. 
\end{lemm}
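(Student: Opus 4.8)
The plan is to display both solid harmonics as one axially symmetric factor multiplied by $x_1$ and $x_2$ respectively, so that the claimed identity collapses to the symmetry of a mixed partial derivative. First I would specialize \eqref{eq:esphecoor} to $m=1$, writing $U^+_{n,1}=\rho^n P_n^1(\cos\theta)\cos\varphi$ and $U^-_{n,1}=\rho^n P_n^1(\cos\theta)\sin\varphi$, and invoke the remark following \eqref{eq:esphecoor} that for odd order $P_n^1(t)$ is $\sqrt{1-t^2}$ times a polynomial; with $t=\cos\theta$ this gives $P_n^1(\cos\theta)=\sin\theta\,Q_n(\cos\theta)$ for a polynomial $Q_n$ (proportional to $P_n'$) of degree $n-1$. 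Substituting $\cos\varphi = x_1/(\rho\sin\theta)$ and $\sin\varphi = x_2/(\rho\sin\theta)$ cancels the factor $\sin\theta$ and yields
\[ U^+_{n,1} = \Phi\, x_1, \qquad U^-_{n,1}=\Phi\, x_2, \qquad \Phi := \rho^{\,n-1}Q_n(\cos\theta). \]

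Second, I would verify that $\Phi$ is axially symmetric about the $x_0$-axis, i.e.\ that it depends on $(x_1,x_2)$ only through $s:=x_1^2+x_2^2$. Writing $\Phi = \rho^{\,n-1}Q_n(x_0/\rho)$ and expanding $Q_n$, every monomial $t^k$ occurring in $Q_n$ satisfies $k\equiv n-1\pmod 2$, since $Q_n$ inherits the parity $(-1)^{n-1}$ of $P_n'$; hence each surviving power $\rho^{\,n-1-k}$ has even exponent and $\Phi$ is a genuine homogeneous polynomial in $x_0$ and $\rho^2=x_0^2+s$, in particular a function $\tilde\Phi(x_0,s)$ of $x_0$ and $s$ alone. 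This is the step I expect to need the most care, as it is exactly what legitimizes the division by $\sin\theta$ and guarantees that $\Phi$ is smooth (indeed polynomial) across the axis $s=0$.

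Finally, the identity follows by a two-line computation. Since $\partial_1 x_2 = \partial_2 x_1 = 0$ and, by the chain rule applied to $\tilde\Phi(x_0,s)$, one has $\partial_1\Phi = 2x_1\,\tilde\Phi_s$ and $\partial_2\Phi = 2x_2\,\tilde\Phi_s$, I obtain
\[ \partial_2 U^+_{n,1} = (\partial_2\Phi)\,x_1 = 2x_1x_2\,\tilde\Phi_s = (\partial_1\Phi)\,x_2 = \partial_1 U^-_{n,1}, \]
which is the assertion. In essence the whole argument is the observation that $U^+_{n,1}$ and $U^-_{n,1}$ share the common factor $\Phi$, even in $(x_1,x_2)$, so the two mixed derivatives coincide by the symmetry $x_1\leftrightarrow x_2$; no property of $P_n^1$ beyond its order-one factorization and parity is required.
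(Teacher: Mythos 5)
Your proof is correct. The paper offers no written argument for Lemma \ref{lemm:contrag1} at all --- it is dismissed with the remark that ``the verification is also a direct calculation,'' which presumably means brute-force differentiation of the explicit expressions \eqref{eq:esphecoor}, perhaps with the Legendre recurrences \eqref{eq:rec1}--\eqref{eq:rec3}. Your route is genuinely different and more structural: the factorizations $U^+_{n,1}=\Phi\,x_1$ and $U^-_{n,1}=\Phi\,x_2$ with the common factor $\Phi=\rho^{n-1}Q_n(\cos\theta)$ reduce the identity to the symmetry $\partial_2(\Phi x_1)=2x_1x_2\tilde\Phi_s=\partial_1(\Phi x_2)$. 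The one delicate point --- that $\Phi$ is a polynomial in $x_0$ and $s=x_1^2+x_2^2$, so that division by $\sin\theta$ is legitimate and the chain-rule step $\partial_i\Phi=2x_i\tilde\Phi_s$ is valid across the axis $s=0$ --- is exactly the one you flag and handle correctly: $P_n^1(t)=\pm\sqrt{1-t^2}\,P_n'(t)$, so $Q_n=\pm P_n'$ has parity $(-1)^{n-1}$, every surviving monomial $c_kx_0^k\rho^{n-1-k}$ has $n-1-k$ even, and the sign convention for $P_n^m$ is immaterial since the same $\Phi$ appears in both harmonics. What your approach buys over the paper's implicit computation is an explanation of \emph{why} the identity holds (the two order-one harmonics share an even common factor, with the $\varphi$-dependence isolated in $x_1$ and $x_2$), plus immediate generalizability: the same factorization $U^{\pm}_{n,m}=\Phi_m\cdot\operatorname{Re}/\operatorname{Im}\bigl((x_1+ix_2)^m\bigr)$ with $\Phi_m$ even in $(x_1,x_2)$ yields analogous cross-derivative identities for higher $m$ at no extra cost; what the paper's ``direct calculation'' buys is merely brevity.
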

 
We now turn to our object of interest, the collection $\inf_n$
of inframonogenic homogeneous polynomials of degree $n$. The following
result is inspired by the argument in \cite{Mus1968}.
 
\begin{theo} \label{prop:diminfr}
  The dimension over $\R$ of  $\inf_n$ is $6n+3$.
\end{theo}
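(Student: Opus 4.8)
My plan is to realize $\inf_n$ as the kernel of an explicit linear operator between finite-dimensional spaces and to read off its dimension by rank--nullity, concentrating the whole difficulty into a single surjectivity statement. Write $\pol_n^{\,3}$ for the space of reduced-quaternion-valued homogeneous polynomials $f=f_0+f_1e_1+f_2e_2$ of degree $n$, so $\dim_\R\pol_n^{\,3}=3\dim_\R\pol_n=3\binom{n+2}{2}$, and consider the linear map
\[ L\colon \pol_n^{\,3}\to\pol_{n-2}^{\,3},\qquad L(f)=\partialbar f\partialbar. \]
This is well defined: $\partialbar(\cdot)\partialbar$ is a homogeneous second-order operator, so it lowers degree by $2$, and it keeps values in $\R^3$ by Proposition \ref{prop:R3conserved}. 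By definition $\inf_n=\ker L$, so once I show that $L$ is surjective I get
\[ \dim_\R\inf_n = \dim_\R\pol_n^{\,3}-\dim_\R\pol_{n-2}^{\,3}
   = 3\Big(\binom{n+2}{2}-\binom{n}{2}\Big)=3(2n+1)=6n+3, \]
which is the claim. For $n=0,1$ the target is trivial, $L=0$, and $\dim\inf_n=\dim\pol_n^{\,3}=6n+3$ directly, so only $n\ge2$ needs the surjectivity argument.

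Everything thus reduces to proving $L$ surjective, and the clean route I would take is through the principal symbol. Replacing each $\partial_i$ by a real parameter $\xi_i$ turns $\partialbar$ into the reduced quaternion $s(\xi)=\xi_0+\xi_1e_1+\xi_2e_2$, so the symbol of $L$ at $\xi=(\xi_0,\xi_1,\xi_2)$ is the linear map
\[ \sigma_\xi\colon\R^3\to\R^3,\qquad \sigma_\xi(v)=s(\xi)\,v\,s(\xi). \]
Its image lands in $\R^3$: this is the symbol counterpart of Proposition \ref{prop:R3conserved}, whose content is exactly that the $e_3$-component of $\partialbar(\cdot)\partialbar$ vanishes identically. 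Now for $\xi\ne0$ the quaternion $s(\xi)$ is nonzero, hence invertible in $\H$, so $q\mapsto s(\xi)\,q\,s(\xi)$ is a bijection of all of $\H$; restricting this injective map to its invariant subspace $\R^3$ gives an injective, hence bijective, endomorphism of $\R^3$. Therefore $\sigma_\xi$ is invertible for every $\xi\ne0$, i.e.\ $L$ is elliptic, and $\det\sigma_\xi$ is a nonvanishing homogeneous polynomial in $\xi$ (a short computation gives $\det\sigma_\xi=|\xi|^6$, though all I need is that it is not the zero polynomial).

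To pass from ellipticity to surjectivity on polynomials I would invoke the Fischer (apolar) inner product on $\bigoplus_m\pol_m$, under which $\partial_i$ and multiplication by $x_i$ are mutually adjoint. Writing $L=M(\partial)$ for the $3\times3$ matrix of constant-coefficient second-order operators representing $L$ in the basis $e_0,e_1,e_2$, its Fischer adjoint is the multiplication operator $L^{*}=M(x)^{T}\,\cdot$, carrying $\pol_{n-2}^{\,3}$ into $\pol_n^{\,3}$. Since $\det\big(M(x)^{T}\big)=\det M(x)=\det\sigma_x$ is a nonzero polynomial, the identity $M(x)^T g(x)\equiv 0$ yields, after multiplication by the adjugate, $\det\sigma_x\,g(x)\equiv0$ and hence $g\equiv0$ because the polynomial ring is an integral domain; thus $L^{*}$ is injective. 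As the Fischer product is positive definite, injectivity of $L^{*}$ is equivalent to surjectivity of $L$, and rank--nullity then gives $6n+3$.

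The main obstacle is precisely the surjectivity of $L$; once the symbol is recognized as two-sided quaternionic multiplication the obstruction dissolves, since invertibility of the symbol follows from invertibility of nonzero quaternions rather than from any delicate estimate. If one wished to avoid the Fischer machinery, the same count could be obtained by analyzing the three scalar equations $L_0=L_1=L_2=0$ directly and reducing them to the known dimensions \eqref{eq:dimharn} and \eqref{eq:dimbihn} of $\har_n$ and $\bih_n$ in the spirit of \cite{Mus1968}, but the elliptic/Fischer route is shorter and more transparent.
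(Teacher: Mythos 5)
Your proof is correct, and it takes a genuinely different route on the one step where all the difficulty lives. Both you and the paper reduce the theorem to the same statement: $\inf_n$ is the kernel of $L=\partialbar(\cdot)\partialbar\colon \pol_n^{\,3}\to\pol_{n-2}^{\,3}$ (well defined by Proposition \ref{prop:R3conserved}), $L$ is surjective for $n\ge2$, and rank--nullity gives $3\bigl(\binom{n+2}{2}-\binom{n}{2}\bigr)=6n+3$, with $n=0,1$ handled directly. Where you diverge is the surjectivity proof. The paper, following Musia\l ek, writes out the linear system \eqref{eq:coefeqs} relating the monomial coefficients $b^{j_0,j_1,j_2}_k$ of $f$ to those of $\partialbar f\partialbar$, orders the unknowns lexicographically, and observes that the equation indexed by $(i_0,i_1,i_2,k)$ has first nonzero term $-(i_2+1)(i_2+2)\,b^{i_0,i_1,i_2+2}_k$, a distinct leading variable for each equation; rows whose first nonzero entries lie in distinct columns are linearly independent, so the system has full rank. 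You instead prove surjectivity structurally: the symbol $\sigma_\xi(v)=s(\xi)\,v\,s(\xi)$ is invertible on $\R^3$ for $\xi\ne0$ because nonzero quaternions are invertible and $\R^3$ is an invariant subspace, and Fischer duality converts the nonvanishing of $\det\sigma_x$ into injectivity of the adjoint $g\mapsto M(x)^{T}g$ via the adjugate, hence surjectivity of $L$. The steps check out: invariance of $\R^3$ under $\sigma_\xi$ follows from Proposition \ref{prop:R3conserved} applied to $f(x)=\tfrac12(\xi\cdot x)^2v$; your determinant is right, since $e_3\,s(\xi)=\overline{s(\xi)}\,e_3$ gives $s(\xi)e_3s(\xi)=|\xi|^2e_3$, so $\det\sigma_\xi=|\xi|^8/|\xi|^2=|\xi|^6$; and the Fischer pairing is positive definite with $x_i$ adjoint to $\partial_i$, so $\ker L^{*}=0$ forces $\operatorname{im}L=\pol_{n-2}^{\,3}$. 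As for what each approach buys: the paper's leading-coefficient echelon argument is elementary and self-contained, needing nothing beyond the displayed coefficient; yours isolates the ellipticity of the sandwich operator and generalizes immediately (it gives the same count for $\partial(\cdot)\partial$, and for any constant-coefficient operator whose symbol is pointwise invertible), at the cost of importing the Fischer-product machinery. A small side observation: the paper's displayed values of $n_e$ and $n_v$ are evidently interchanged (a typo, since as printed $n_v-n_e$ would be negative), and your count matches the intended one.
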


\begin{proof}
  All polynomials of degree $\le1$ are inframonogenic.  For $n=0$
  every $\R^3$-valued polynomial is constant, hence $\dim\inf_0=3$,
  while for $n=1$ each component of the polynomial is linear, giving
  $\dim\inf_1=9$.  Let $n\ge2$, and $j_0+j_1+j_2=n$. By Proposition
  \ref{prop:bilateralderivsimple},
\begin{align*}
  \sum_{j_0j_1j_2} &\partialbar\big(b^{j_0j_1j_2}_0x_1^{j_0}x_1^{j_1} x_2^{j_2} + b^{j_0j_1j_2}_1x_1^{j_0}x_1^{j_1} x_2^{j_2}e_1+b^{j_0j_1j_2}_2x_1^{j_0}x_1^{j_1} x_2^{j_2}e_2\big)\partialbar  \\
  &=  \sum_{j_0j_1j_2}
 (-b^{j_0j_1j_2}_0\partial_2^2+\cdots)x_1^{j_0}x_1^{j_1} x_2^{j_2} +
(-b^{j_0j_1j_2}_1\partial_2^2+\cdots)x_1^{j_0}x_1^{j_1} x_2^{j_2}e_1 
  \\&\qquad\qquad  +  (-b^{j_0j_1j_2}_2\partial_2^2+\cdots)x_1^{j_0}x_1^{j_1} x_2^{j_2}e_2  \\[1ex]
  &=  \sum_{j_0j_1j_2}
 (-j_2(j_2+1) b^{j_0j_1j_2}_0 x_1^{j_0}x_1^{j_1} x_2^{j_2-2} +\cdots) \\
 &\qquad\qquad + (-j_2(j_2+1) b^{j_0j_1j_2}_1 x_1^{j_0}x_1^{j_1} x_2^{j_2-2} +\cdots)e_1 \\
 &\qquad\qquad + (-j_2(j_2+1) b^{j_0j_1j_2}_2 x_1^{j_0}x_1^{j_1} x_2^{j_2-2} +\cdots)e_2 \\[1ex]
   &= \sum_{i_0i_1i_2}
  (-(i_2+1)(i_2+2) b^{i_0,i_1,i_2+2}_0  +\cdots)x_1^{i_0}x_1^{i_1} x_2^{i_2}
\\&\qquad\qquad  + (-(i_2+1)(i_2+2) b^{i_0,i_1,i_2+2}_1  +\cdots) x_1^{i_0}x_1^{i_1} x_2^{i_2}e_1
  \\&\qquad\qquad + (-(i_2+1)(i_2+2) b^{i_0,i_1,i_2+2}_2  +\cdots)x_1^{i_0}x_1^{i_1} x_2^{i_2}e_2   \\[1ex]
  &= \sum_{i_0i_1i_2}
    a^{i_0,i_1,i_2}_0  x_1^{i_0}x_1^{i_1} x_2^{i_2}
    +a^{i_0,i_1,i_2}_1  x_1^{i_0}x_1^{i_1} x_2^{i_2}e_1
    +a^{i_0,i_1,i_2}_2  x_1^{i_0}x_1^{i_1} x_2^{i_2}e_2 ,
\end{align*} 
where each coefficient can be expressed in the form
\begin{align} \label{eq:coefeqs}
  a^{i_0i_1i_2}_k = -(i_2+1)(i_2+2) b^{i_0,i_1,i_2+2}_k
  +\cdots,
\end{align}
$k=0,1,2$. This is a system of $n_e$ equations in the $n_v$ variables
$b^{j_0,j_1,j_2}_k$,
\[ n_e = \frac{3(n+1)(n+2)}{2} ,\;\; n_v= \frac{3n(n-1)}{2} ,  
\]
one equation for each $a^{i_0,i_1,i_2}_k$, relating the
coefficients  $b^{j_0,j_1,j_2}_k$.

We order the $b^{j_0,j_1,j_2}_k$ lexicographically by
$(j_0,j_1,j_2,k)$. Then \eqref{eq:coefeqs} displays the first nonzero
term in the equation indexed by $(i_0,i_1,i_2,k)$. Different equations
have a different first variable.  Therefore the rank of the system
\eqref{eq:coefeqs} is equal to $n_e$, and the dimension of the
solution set is $n_v-n_e=6n+3$. 
\end{proof}

Recalling Proposition \ref{prop:partialbpartial}, we now see that not
every inframonogenic function is of the form $\partial b\partial$
since there are only $4n+2$ linearly independent elements of
$\bih_{n+1}$.  This makes the study of inframonogenic functions more
challenging than monogenic functions, where the dimensions of the corresponding sets of polynomials coincide with the dimensions of the solid spherical harmonics of the next higher degree  (cf.\ Proposition  \ref{prop:X}
below).

\section{Bases of inframonogenic polynomials}

A basis for homogeneous monogenic polynomials taking values in the
reduced quaternions of given degree is well-known. For $n\ge0$, let
\begin{align*}
  X^+_{n,m} &=  \partialbar U^+_{n+1,m}  ,\quad 0\le m\le n+1,\\
 X^-_{n,m} &=  \partialbar U^-_{n+1,m}  ,\quad 1\le m\le n+1.
\end{align*}

\begin{prop}[\cite{Cacao2004,CacaoGuerlebeckBock2006,MoraisGurlebeck2012}] \label{prop:X} For each $n\ge0$, the collection
  \[ \{X^+_{n,m}\colon 0\le m\le n+1\} \cup\{X^-_{n,m}\colon
    1\le m\le n+1\} \] is a basis for the real vector space of
  monogenic homogeneous polynomials of degree n, which therefore has
  dimension $2n+3$. The union of these collections for $0\le n<\infty$
  set is orthogonal in the Hilbert space $L^2(B_1(0))$ and is a
  Hilbert basis for the subspace of square-integrable monogenic
  functions. Further, these graded bases satisfy the Appell-type property
 \begin{align} \label{eq:appell}
   \partial X^\pm_{n,m} =  X^\pm_{n,m}\partial = 2 (n + m + 1)  X^\pm_{n-1,m},
   \quad 0\le m\le n,
 \end{align}
while $\partial X^\pm_{n,n+1}=   X^\pm_{n,n+1}\partial =0$.
\end{prop}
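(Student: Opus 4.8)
The plan is to prove the three assertions in turn, each by reduction to a property of the scalar solid harmonics $U^\pm_{n+1,m}$; throughout I use the factorization $\partialbar\,\partial=\partial\,\partialbar=\Delta_3$, which follows from the multiplication rules. For the basis and the dimension I would show that $\partial\colon\har_{n+1}\to\{\text{degree-}n\text{ monogenic polynomials}\}$, $U\mapsto\partial U$, is a linear bijection. Each image $\partial U^\pm_{n+1,m}$ is $\R^3$-valued (since $U$ is scalar), homogeneous of degree $n$, and monogenic, because $\partialbar(\partial U^\pm_{n+1,m})=\Delta_3 U^\pm_{n+1,m}=0$. For injectivity: if $\partial V=0$ for a scalar homogeneous $V$ of degree $n+1\ge 2$, then reading off scalar and vector parts gives $\partial_0V=\partial_1V=\partial_2V=0$, so $V$ is constant, hence $0$. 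For surjectivity: if $P=P_0+P_1e_1+P_2e_2$ is monogenic, then the four real equations packaged in $\partialbar P=0$ say precisely that the field $(P_0,-P_1,-P_2)$ is irrotational and divergence-free; the first makes it the gradient of a homogeneous potential $U$ of degree $n+1$ with $\partial U=P$, and the second makes $U$ harmonic. Since the $U^\pm_{n+1,m}$ are a basis of $\har_{n+1}$, the $X^\pm_{n,m}$ form a basis of the $2(n+1)+1=2n+3$ dimensional monogenic space.

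For orthogonality I would observe that $X^\pm_{n,m}=\partial U^\pm_{n+1,m}$ has components $\big(\partial_0U,-\partial_1U,-\partial_2U\big)$, so the inner product collapses to a Dirichlet form,
\[
\langle X^\pm_{n,m},X^{\pm'}_{n',m'}\rangle=\int_{B_1(0)}\nabla U^\pm_{n+1,m}\cdot\nabla U^{\pm'}_{n'+1,m'}\,dV .
\]
Integrating by parts and using $\Delta_3U^{\pm'}_{n'+1,m'}=0$ together with Euler's relation $\partial_\nu U'=(n'+1)U'$ on the unit sphere turns this into $(n'+1)\int_{\partial B_1(0)}U^\pm_{n+1,m}\,U^{\pm'}_{n'+1,m'}\,dS$, which vanishes unless $n=n'$, $m=m'$ and the signs agree, by orthogonality of the angular factors; the radial and angular integrals are exactly those of Lemma \ref{lemma_orthogonality_spherical_harmonics}. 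Completeness then follows from orthogonality plus totality: a square-integrable monogenic function has harmonic components and a homogeneous expansion converging in $L^2(B_1(0))$, so orthogonality to every $X^\pm_{n,m}$ forces it to be zero.

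For the Appell property the key is that a monogenic $f$ satisfies $\partial f=f\partial=2\partial_0 f$: from $\partialbar f=0$ one gets $\vc\partial f=-\partial_0 f$, whence $\partial f=\partial_0 f-\vc\partial f=2\partial_0 f$, and likewise on the right for $\R^3$-valued $f$. Applying this to $X^\pm_{n,m}$ and commuting $\partial_0$ past $\partial$ gives $\partial X^\pm_{n,m}=2\,\partial\big(\partial_0 U^\pm_{n+1,m}\big)$. Everything then reduces to the ladder relation $\partial_0U^\pm_{n+1,m}=(n+m+1)\,U^\pm_{n,m}$, with the convention $U^\pm_{n,n+1}:=0$: it produces $\partial X^\pm_{n,m}=X^\pm_{n,m}\partial=2(n+m+1)X^\pm_{n-1,m}$, and for $m=n+1$ it gives $\partial X^\pm_{n,n+1}=0$, consistent with $U^\pm_{n+1,n+1}$ being independent of $x_0$.

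The step I expect to be the main obstacle is this ladder relation: showing that $\partial_0$ lowers the degree of a solid spherical harmonic by one while preserving its order, and that the resulting constant is exactly $n+m+1$. This is where the recurrence relations for the associated Legendre functions $P_n^m(\cos\theta)$ enter, and fixing the precise coefficient (rather than merely its existence) is the one genuinely computational point; the surjectivity argument in the first part is the only other place where more than a formal manipulation is required.
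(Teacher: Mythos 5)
Your proof is correct, but the comparison here is not with a proof in the paper: Proposition \ref{prop:X} is stated without proof, imported from \cite{Cacao2004,CacaoGuerlebeckBock2006,MoraisGurlebeck2012}, where the standard route is computational, via the explicit spherical-harmonic representations of the $X^\pm_{n,m}$ (reproduced in Proposition \ref{prop:components}) and the Legendre recurrences \eqref{eq:rec1}--\eqref{eq:rec3}. Your argument is a genuinely different, more structural one, and each step checks out: surjectivity of $U\mapsto\partial U$ from $\har_{n+1}$ onto the degree-$n$ monogenics works because $\partialbar P=0$ unpacks into exactly the four real equations saying that $(P_0,-P_1,-P_2)$ is irrotational and solenoidal, and a homogeneous potential can be produced explicitly by $U(x)=\frac{1}{n+1}\,x\cdot(P_0,-P_1,-P_2)$; the Dirichlet-form reduction plus Green's identity and Euler's relation correctly localizes orthogonality to the sphere, where it is the classical orthogonality of surface harmonics; and the identity $\partial f=f\partial=2\partial_0 f$ for $\R^3$-valued monogenic $f$ (the two-sided refinement of the paper's $\partial f+f\partial=4\partial_0 f$, legitimate here since $f\partialbar=0$ as well) reduces \eqref{eq:appell} to the single ladder relation $\partial_0U^\pm_{n+1,m}=(n+m+1)U^\pm_{n,m}$, which follows from $(1-t^2)\,\frac{d}{dt}P^m_{n+1}(t)=(n+m+1)P^m_n(t)-(n+1)t\,P^m_{n+1}(t)$, exactly the computation you flag. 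What the computational route of the cited references buys instead is the explicit component formulas, which this paper needs later anyway for Theorem \ref{theo:products}; your route gets the dimension, orthogonality, and Appell property with essentially one recurrence. Your totality step is asserted rather than proved, but it is the same Monotone Convergence sketch the paper itself uses for the inframonogenic case, so this is a fair level of detail.

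One point deserves explicit mention: you silently took $X^\pm_{n,m}=\partial U^\pm_{n+1,m}$, with components $(\partial_0U,-\partial_1U,-\partial_2U)$, whereas the paper's displayed definition reads $X^\pm_{n,m}=\partialbar U^\pm_{n+1,m}$. Your reading is the correct one and the displayed definition must be a misprint: since $\partial\partialbar=\Delta_3$, the printed definition would give $\partial X^\pm_{n,m}=\Delta_3U^\pm_{n+1,m}=0$, contradicting \eqref{eq:appell}, and it fails monogenicity already in the first nontrivial case, as $\partialbar U^+_{2,0}=2x_0-x_1e_1-x_2e_2$ satisfies $\partialbar(\partialbar U^+_{2,0})=4$, while $\partial U^+_{2,0}=2x_0+x_1e_1+x_2e_2$ is monogenic and is the value consistent with the entry $Y^+_{2,0}$ in Table \ref{tab:deg2}. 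So your implicit correction is not a gap in your proof but a repair of the statement's notation.
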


We are now in a position to make the following definition.
\begin{defi}\label{defi:basics}   The basic inframonogenic polynomials
  are defined as follows. For degree $n=0$, we have three constant functions:
  \[ \B_0= \{  e_0,\ e_1,\ e_2 \} . \]
For degree $n=1$, we take the linear monomials as follows:
\[ \B_1= \{ x_0 e_0,\ x_1 e_0,\ x_2 e_0,\
      x_0e_1,\ x_1 e_1,\ x_2 e_1,\  x_0 e_2,\ x_1 e_2,\ x_2 e_2 \}. \]
For degrees $n\ge2$, we have three types of homogeneous polynomials:

  \underline{Type 0} ($2n+3$ monogenic polynomials: 
  $n+2$ even, $n+1$ odd):
 \begin{align*}  
     X^+_{n,m}     ,\quad 0\le m\le n+1, \\
     X^-_{n,m}    ,\quad 1\le m\le n+1.
  \end{align*}
  
\underline{Type 1} ($2n-1$ polynomials: $n$ even, $n-1$ odd):
\begin{align*}
 Y^+_{n,m} &=   
    \overline{x}\, X^+_{n-1,m} + X^+_{n-1,m} \,\overline{x} +
     2(n+m) |x|^2  X^+_{n-2,m} , \quad 0\le m\le n-1, \\
  Y^-_{n,m} &=   
    \overline{x}\, X^-_{n-1,m} + X^-_{n-1,m} \,\overline{x} + 
     2(n+m) |x|^2  X^-_{n-2,m} , \quad 1\le m\le n-1.
   \end{align*}
   
   \underline{Type 2} ($2n+1$ polynomials: 1 contragenic (for $m=0$), $n$ even, $n$ odd):
 \begin{align*}
     \Z^+_{n,0} &= \big(
    (e_2\partialbar+\partialbar e_2)U^+_{n+1,1} -    
    (e_1\partialbar+\partialbar e_1)U^-_{n+1,1} \big), \\
 \Z^+_{n,m} &=  
 \overline{x}\, X^+_{n-1,m} + X^+_{n-1,m} \,\overline{x}
     - U^+_{n,m}  + (n+m)  |x|^2 \, X^+_{n-2,m}  ,
   \quad 1\le m\le n,\\
 \Z^-_{n,m} &= 
 \overline{x}\, X^-_{n-1,m} + X^-_{n-1,m} \,\overline{x}
   - U^-_{n,m}  + (n+m)  |x|^2 \, X^-_{n-2,m} ,    \quad 1\le m\le n. 
 \end{align*}
\end{defi}

The notation $\Z$ is intended to reflect that these functions are a provisional construction, to be replaced in \eqref{New_basis} below with the definitive basis. In these definitions, it is tacitly assumed that $X^\pm_{n,m}=0$ when $n<0$. We will see that the collections of $6n+3$ functions given
above lie in $\inf_n$ and are linearly independent, and therefore form
a basis of $\inf_n$. Due to the lengthy calculations, we will go by
steps; along the way, we will express the basic elements in terms of the
solid spherical harmonics \eqref{eq:esphecoor}.

We will need the following fact.
\begin{lemm}\label{lemm:simplecases}  For any monogenic function $f$,
\begin{align*}
  \partialbar(\overline x f+ f\overline x)\partialbar &=
  4\partialbar f_0 ; \\
  \partialbar(|x|^2f)\partialbar &= -2\,\overline{f} .
\end{align*}
\end{lemm}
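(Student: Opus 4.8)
The plan is to treat the two identities separately, in both cases exploiting that a reduced-quaternion-valued monogenic $f$ satisfies \emph{both} $\partialbar f=0$ and $f\partialbar=0$, together with the elementary algebraic identity $f+e_1fe_1+e_2fe_2=-\overline f$, valid for any $f=f_0+\vc f$ with vanishing $e_3$-component. Throughout I would use the product (Leibniz) formulas collected in Section~\ref{sec:formulas} and the linearity of the two-sided operator $\partialbar(\cdot)\partialbar$, together with $\partialbar\overline x=3$, $\partialbar x=-1$, and $\partialbar|x|^2=2x$.

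For the second identity I would first apply the outer left operator. Since $|x|^2$ is scalar-valued, the ordinary Leibniz rule gives $\partialbar(|x|^2f)=(\partialbar|x|^2)f+|x|^2\partialbar f=2xf$, where the second summand drops out by monogenicity. Applying the right operator then yields $(2xf)\partialbar$; expanding by the Leibniz rule and collecting the terms in which the derivatives fall on $f$ reconstitutes $x(f\partialbar)=0$, leaving only $2(f+e_1fe_1+e_2fe_2)$. The algebraic identity above then produces $-2\overline f$, as claimed.

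For the first identity I would compute $\partialbar(f\overline x)$ and $\partialbar(\overline x f)$ separately. The term $f\overline x$ is the clean one: here $(\partialbar f)\overline x=0$ removes every derivative of $f$, and what survives is $f-e_1fe_1-e_2fe_2=2f+\overline f=3f_0+\vc f$. The term $\overline x f$ is more delicate: using $\partialbar f=0$ to eliminate $\partial_0 f$ produces the commutators $[e_i,\overline x]$, which are $e_3$-valued, so that $\partialbar(\overline x f)=3f+2e_3(x_1\partial_2 f-x_2\partial_1 f)$. Adding the two and applying the right operator $(\cdot)\partialbar$, the scalar-and-vector part contributes $2\partialbar f_0$ (again using $f\partialbar=0$, so that $\vc f\partialbar=-\partialbar f_0$), while the $e_3$-valued piece, after one more application of $f\partialbar=0$ which collapses it to $2e_3\big((\partial_2 f)e_1-(\partial_1 f)e_2\big)$, contributes a further $2\partialbar f_0$; summing gives $4\partialbar f_0$.

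The hard part will be controlling the $e_3$-valued intermediate terms. Because a product of two reduced quaternions need not be reduced, $\overline x f$ itself acquires an $e_3$-component, and Proposition~\ref{prop:R3conserved} does not apply to it directly. The cancellation of this component in the final answer rests on using \emph{both} the scalar and the $e_3$ parts of the monogenicity equation: the scalar part gives $\partial_0 f_0=\partial_1 f_1+\partial_2 f_2$, which is what turns the surviving scalar term into $2\partial_0 f_0$, while the $e_3$ part gives $\partial_1 f_2=\partial_2 f_1$, which is exactly what annihilates the residual $e_3$-coefficient. Verifying these two reductions is the only genuine computation; everything else is bookkeeping with the Leibniz rule.
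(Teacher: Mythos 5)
Your proposal is correct and takes essentially the same route as the paper's own proof: apply $\partialbar$ on the left first (using $\partialbar f=0$), then on the right (using $f\partialbar=0$, which also makes the derivatives $\partial_i f$ right-monogenic), and close with the component equations $\partial_0f_0=\partial_1f_1+\partial_2f_2$ and $\partial_1f_2=\partial_2f_1$; your intermediate identities, e.g.\ $\partialbar(\overline x f)=3f+2e_3(x_1\partial_2f-x_2\partial_1f)$, $\partialbar(f\overline x)=3f_0+\vc f$, and the collapse of $(2xf)\partialbar$ to $2(f+e_1fe_1+e_2fe_2)=-2\overline f$, all agree with the paper's computation. The only cosmetic difference is that you expand the products directly and use the conjugation identity $f+e_1fe_1+e_2fe_2=-\overline f$, where the paper instead invokes its appendix Leibnitz rules (Proposition~\ref{prop:leibnitz}).
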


\begin{proof}
   By Proposition \ref{prop:leibnitz} and the monogenicity of $f$,
\begin{align*}
\partialbar(\overline x f+ f\,\overline x) &=
6f+2e_3\big(x_1(\partial_2f)
-x_2(\partial_1f)\big) + 2e_3\big(f_1e_2-f_2e_1\big),
\end{align*}
so applying   $\partialbar$ on the  right leaves
\begin{align*}
 \partialbar( \overline x f+ f\,\overline x)\partialbar &=
    2e_3\big( (x_1(\partial_2f))\partialbar
        - (x_2(\partial_1f))\partialbar  )
   + (f_1e_2)\partialbar-(f_2e_1)\partialbar   \big).
\end{align*} 
Again by Proposition \ref{prop:leibnitz},
\begin{align*}
\partialbar(\overline x f+ f \overline x)\partialbar &=
 2e_3\big(\, (e_1\partial_2f - 2\partial_2f_2\,e_3)
  - (e_2\partial_1f + 2\partial_1f_1\, e_3 ) \\
  &\quad \quad\  +   ( (f_1\partialbar)e_2 - 2\partial_1f_1\,e_3 ) -
  (f_2\partialbar) e_1-2\partial_2f_2 e_3) \big)\\
&=8(  \partial_1f_1 +\partial_2f_2 ) +2(e_1\partial_1f  +e_2\,\partial_2f)   +2e_3(f_1 \partialbar)e_2-2e_3(f_2\partialbar)e_1.
\end{align*}
Expanding terms into their $e_0, e_1, e_2, e_3 $ parts and grouping,
we have
\[ \partialbar(\overline x f+ f \overline x)\partialbar =
  4(\partial_1f_1+\partial_2f_2) + 2(\partial_1f_0-\partial_0f_1)e_1
  +2(\partial_2f_0-\partial_0f_2)e_2 +4(\partial_1f_2-\partial_2f_1)e_3, \]
which is equal to $4\partialbar f_0$ because $f$ is monogenic. This proves the first assertion.

For the second assertion, since $|x|^2$ is scalar, one can apply the
ordinary Leibnitz rule to obtain
$\partialbar(|x|^2 f) = (\partialbar
|x|^2)f + |x|^2(\partialbar
f)=2xf$ since $\partial_i |x|^2 = 2x_i$. By
Proposition \ref{prop:leibnitz},
  \begin{align*}
  \frac{1}{2}\partialbar(|x|^2 f)\partialbar 
  &= x(f\partialbar) + (x\partialbar)f + 2(f_1e_2-f_2e_1)e_3 )
    = -\overline{f}
  \end{align*}
since $x\partialbar=-1$.
\end{proof}

 \begin{prop}
   $X^\pm_{n,m}, Y^\pm_{n,m}, \Z^\pm_{n,m}\in\inf_n$ for all $n, m$.
\end{prop}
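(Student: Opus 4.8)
The plan is to treat the three types separately, in each case reducing the computation to Lemma~\ref{lemm:simplecases} together with the Appell relation \eqref{eq:appell}. The Type~0 functions $X^\pm_{n,m}$ are monogenic by Proposition~\ref{prop:X}, hence automatically inframonogenic, so nothing is needed there. For the other two types the common tool is a single identity for the scalar part of a monogenic function, which I would establish first by a short direct computation: if $g$ is monogenic then
\[ \partialbar g_0=\tfrac12\,\overline{\partial g}. \]
Applying this with $g=X^\pm_{n-1,m}$ and invoking \eqref{eq:appell} gives the relation on which everything hinges,
\[ \partialbar\bigl(X^\pm_{n-1,m}\bigr)_0=(n+m)\,\overline{X^\pm_{n-2,m}} . \]

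For Type~1 I would write $Y^+_{n,m}=\bigl(\overline x\,X^+_{n-1,m}+X^+_{n-1,m}\,\overline x\bigr)+2(n+m)\,|x|^2X^+_{n-2,m}$ and apply $\partialbar(\cdot)\partialbar$ termwise using the two formulas of Lemma~\ref{lemm:simplecases}. The first bracket yields $4\,\partialbar(X^+_{n-1,m})_0$ and the second yields $2(n+m)\cdot(-2\,\overline{X^+_{n-2,m}})$; by the displayed relation these are $4(n+m)\overline{X^+_{n-2,m}}$ and $-4(n+m)\overline{X^+_{n-2,m}}$, which cancel. The identical computation with $U^-,X^-$ disposes of $Y^-_{n,m}$.

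For Type~2 with $m\ge1$ the only new ingredient is the scalar summand $-U^\pm_{n,m}$. A short direct computation shows that for any scalar harmonic $U$ one has $\partialbar U\partialbar=2\partial_0\partialbar U$; combining this with $\partial_0 U^\pm_{n,m}=(X^\pm_{n-1,m})_0$ (the scalar part of the monogenic generator) and commuting $\partial_0$ past $\partialbar$ gives $\partialbar U^\pm_{n,m}\partialbar=2\partialbar(X^\pm_{n-1,m})_0=2(n+m)\overline{X^\pm_{n-2,m}}$. Since $\Z^\pm_{n,m}$ differs from $Y^\pm_{n,m}$ only by halving the $|x|^2$ coefficient to $(n+m)$ and subtracting $U^\pm_{n,m}$, the operator $\partialbar(\cdot)\partialbar$ produces $4(n+m)\overline{X^\pm_{n-2,m}}$ from the bracket, $-2(n+m)\overline{X^\pm_{n-2,m}}$ from the $|x|^2$ term, and $-2(n+m)\overline{X^\pm_{n-2,m}}$ from $-U^\pm_{n,m}$, summing to zero. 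The out-of-range case $m=n$, where $X^\pm_{n-2,m}=0$, is automatic.

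The case $\Z^+_{n,0}$ must be handled by hand, and this is where the main obstacle lies. Expanding the anticommutators $e_2\partialbar U^+_{n+1,1}+\partialbar U^+_{n+1,1}e_2$ and $e_1\partialbar U^-_{n+1,1}+\partialbar U^-_{n+1,1}e_1$ and using Lemma~\ref{lemm:contrag1} to annihilate the scalar part, I expect to obtain $\Z^+_{n,0}=2\partial_0\bigl(U^+_{n+1,1}e_2-U^-_{n+1,1}e_1\bigr)$. Because $\partial_0$ commutes with the sandwich operator, it then suffices to show that $G=U^+_{n+1,1}e_2-U^-_{n+1,1}e_1$ is inframonogenic, and a direct expansion of $\partialbar G\partialbar$ into $e_0,e_1,e_2,e_3$ parts should vanish: the scalar and $e_3$ parts cancel by Lemma~\ref{lemm:contrag1}, while the $e_1$ and $e_2$ parts reduce to $-\Delta_3U^-_{n+1,1}$ and $\Delta_3U^+_{n+1,1}$, which vanish by harmonicity. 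The delicate point throughout is the conjugation: although \eqref{eq:appell} naturally produces $X^\pm_{n-2,m}$, every cancellation instead demands its conjugate $\overline{X^\pm_{n-2,m}}$, and this conjugate enters solely through $\partialbar g_0=\tfrac12\overline{\partial g}$. Tracking it correctly—rather than naively differentiating the monogenic generator—is exactly what makes the coefficient patterns $4-4=0$ and $4-2-2=0$ line up.
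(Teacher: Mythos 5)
Your proof is correct, and for Type 0, Type 1, and $\Z^\pm_{n,m}$ with $m\ge1$ it is essentially the paper's own argument: termwise application of Lemma~\ref{lemm:simplecases} combined with the Appell property \eqref{eq:appell}. The difference there is only bookkeeping: where the paper recombines $4\partialbar[X^\pm_{n-1,m}]_0-2\partialbar\,\overline{X^\pm_{n-1,m}}=2\partialbar\bigl(2[X^\pm_{n-1,m}]_0-\overline{X^\pm_{n-1,m}}\bigr)=2\partialbar X^\pm_{n-1,m}=0$ via the identity $2f_0-\overline f=f$ and monogenicity, you first isolate $\partialbar g_0=\tfrac12\overline{\partial g}$ (which is just the vector part of $\partialbar g=0$ together with $\partial g=2\partial_0 g$) and let the conjugates cancel numerically ($4-4=0$, $4-2-2=0$); likewise your chain $\partialbar U^\pm_{n,m}\partialbar=2\partial_0\partialbar U^\pm_{n,m}=2\partialbar[X^\pm_{n-1,m}]_0=2(n+m)\overline{X^\pm_{n-2,m}}$ is a rearrangement of the paper's $\partialbar U^\pm_{n,m}\partialbar=\overline{X}^\pm_{n-1,m}\partialbar=2(n+m)\overline{X}^\pm_{n-2,m}$. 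The genuine divergence is at $\Z^+_{n,0}$: the paper verifies the sandwich equation directly on the defining expression, matching $\partialbar e_2\partialbar U^+_{n+1,1}$ against $U^-_{n+1,1}\partialbar e_1\partialbar$ by Lemma~\ref{lemm:contrag1}, whereas you cancel the scalar parts of the two anticommutators by Lemma~\ref{lemm:contrag1} to obtain the factorization $\Z^+_{n,0}=2\partial_0\bigl(U^+_{n+1,1}e_2-U^-_{n+1,1}e_1\bigr)$ and, since $\partial_0$ commutes with $\partialbar(\cdot)\partialbar$, reduce to showing that the contragenic combination $G=U^+_{n+1,1}e_2-U^-_{n+1,1}e_1$ is inframonogenic; your claimed reductions check out (using Lemma~\ref{lemm:contrag1} to rewrite the cross derivatives, the $e_1$- and $e_2$-parts of $\partialbar G\partialbar$ are exactly $-\Delta_3U^-_{n+1,1}$ and $\Delta_3U^+_{n+1,1}$). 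This reorganization is slightly cleaner than the paper's and buys the additional fact that the degree-$(n+1)$ contragenic generator itself is inframonogenic, consistent with the identification of $\Z^+_{n,0}$ with a constant multiple of $U^-_{n,1}e_1-U^+_{n,1}e_2$ in Proposition~\ref{prop:components}. Two minor points: in your expansion the $e_3$-part of $\partialbar G\partialbar$ vanishes identically by Proposition~\ref{prop:R3conserved}, so Lemma~\ref{lemm:contrag1} is needed only for the scalar part; and membership in $\inf_n$ also requires the (trivial) observation that $Y^\pm_{n,m}$ and $\Z^\pm_{n,m}$ are $\R^3$-valued, immediate since $\overline{x}f+f\,\overline{x}$ has vanishing $e_3$-term whenever $f$ does.
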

\begin{proof}
  It is clear that by construction, all of these functions are
  homogeneous polynomials of degree $n$ because $U^\pm_{n,m}$ and
  $X^\pm_{n,m}$ have this property. It is easily seen that they are
  $\R^3$-valued, for example, using Lemma \ref{lemm:simplecases}. We
  proceed to verify that they are all indeed inframonogenic.

  The statement for $X^\pm_{n,m}$ with $0 \le m\le n+1$ follows
  immediately from Proposition \ref{prop:X}.

Using Lemma \ref{lemm:simplecases} and the Appell property \eqref{eq:appell}, we see that
\begin{align*}
  \partialbar\, Y^\pm_{n,m}\, \partialbar &=
    4\partialbar[X^\pm_{n-1,m}]_0 + 2(n+m)(-2\overline{X^\pm_{n-1,m}})\\
    &= 2\partialbar(2[X^\pm_{n-1,m}]_0 -\overline  {X^\pm_{n-1,m}})\\
    &=2\partialbar X^\pm_{n-1,m}\\ &=0,
\end{align*}
so the $\Y^\pm_{n,m}$ are inframonogenic.

Next observe that since $U^+_{n+1,n}$ is harmonic,
$(\partial_0^2+\partial_1^2-\partial_2^2)U^+_{n+1,n}=
2\partial_2^2U^+_{n+1,n}$, so
  \begin{align*}
  \partialbar e_2\partialbar U^+_{n+1,n} 
  &= -2(\partial_0\partial_2 +\partial_1\partial_2e_1-\partial_2^2e_2)  U^+_{n+1,n} \\
  &= -2(\partial_0\partial_1 +\partial_1^2e_1-\partial_1\partial_2e_2)  U^-_{n+1,n} \\
  &= U^-_{n+1,n} \partialbar e_1\partialbar  ,
  \end{align*}
  where the second equality follows from Lemma \ref{lemm:contrag1}. Therefore
  \[ \partialbar(e_2\partialbar U^+_{n+1,n} - \partialbar U^-_{n+1,n}e_1)
    \partialbar =0, \]
and similarly $\partialbar( \partialbar e_2U^+_{n+1,n} - _1\partialbar U^-_{n+1,n}e_1 )
    \partialbar =0$. Therefore $\Z^+_{n,0}$ is inframonogenic for all
   $n\ge2$.
   
The verification for $\Z^\pm_{n,m}$ is similar, based on the fact that
\[  \partialbar U^\pm_{n,m} \partialbar =
  \overline{X}^\pm_{n-1,m} \partialbar=2(n+m)\overline{X}^\pm_{n-2,m} .
  \qedhere
 \]  
\end{proof}

Some examples of the basic type 1 and type 2 inframonogenic
polynomials in low degree are exhibited in Tables
\ref{tab:deg2}--\ref{tab:deg14}.

\begin{table}   \centering
\[
 \begin{array}{l|l}   \hline
 Y^+_{2,0} &   8 x_0^2 + 6 x_1^2 + 6 x_2^2 -2 x_0 x_1 e_1  -2 x_0 x_2  e_2  \\[.5ex]
 Y^+_{2,1} &    ( 12 x_0^2 + 12 x_1^2 + 6 x_2^2 )e_1 +  6 x_1 x_2  e_2  \\[.5ex]
 Y^-_{2,1} &    6 x_1 x_2 e_1 + ( 12 x_0^2 + 6 x_1^2 + 12 x_2^2 ) e_2  \\[.5ex] \hline
 \Z^+_{2,0} &    -3 x_0 x_2 e_1 +  3 x_0 x_1  e_2  \\[.5ex]
 \Z^+_{2,1} &   \frac{7}{5} x_0 x_1 + ( \frac{133}{110} x_0^2 + \frac{-21}{110} x_1^2 + \frac{-329}{220} x_2^2 )e_1 + ( \frac{287}{220} x_1 x_2 + \frac{2}{15}   ) e_2  \\[.5ex]
 \Z^+_{2,2} &  -14 x_1^2 + 14 x_2^2   -14 x_0 x_1 e_1 + ( 14 x_0 x_2 + \frac{1}{15} ) e_2  \\[.5ex]
 \Z^-_{2,1} &   \frac{7}{5} x_0 x_2 +  \frac{287}{220} x_1 x_2 e_1 + ( \frac{133}{110} x_0^2 + \frac{-329}{220} x_1^2 + \frac{-21}{110} x_2^2 + \frac{2}{15}   ) e_2  \\[.5ex]
 \Z^-_{2,2} &   -28 x_1 x_2 -14 x_0 x_2 e_1 + ( -14 x_0 x_1 + \frac{1}{15}   ) e_2  \\[.5ex] \hline
 \end{array}
\]
\caption{Basic type 1 and type 2 inframonogenic polynomials of degree $2$.}
\label{tab:deg2}
\end{table}

\def\split{\\&\quad}

\begin{table}[th!]   \centering
\[ 
  \begin{array}{l|l}   \hline
 Y^+_{3,0} &  18 x_0^3 + 15 x_0 x_1^2 + 15 x_0 x_2^2 + ( 6 x_0^2 x_1 + 9 x_1^3 + 9 x_1 x_2^2 )e_1\split  + ( 6 x_0^2 x_2 + 9 x_1^2 x_2 + 9 x_2^3 ) e_2  \\[.5ex]
 Y^+_{3,1} &  -36 x_0^2 x_1 - 33 x_1^3 - 33 x_1 x_2^2 \split + ( 36 x_0^3 + 39 x_0 x_1^2 + 21 x_0 x_2^2 )e_1 + ( 18 x_0 x_1 x_2 ) e_2  \\[.5ex]
 Y^+_{3,2} &  -30 x_0 x_1^2 + 30 x_0 x_2^2 + ( -120 x_0^2 x_1 - 90 x_1^3 - 30 x_1 x_2^2 )e_1 \split + ( 120 x_0^2 x_2 + 30 x_1^2 x_2 + 90 x_2^3 ) e_2  \\[.5ex]
 Y^-_{3,1} &  -36 x_0^2 x_2 - 33 x_1^2 x_2 - 33 x_2^3 + ( 18 x_0 x_1 x_2 )e_1 + ( 36 x_0^3 + 21 x_0 x_1^2 + 39 x_0 x_2^2 ) e_2  \\[.5ex]
    Y^-_{3,2} &  -60 x_0 x_1 x_2 + ( -120 x_0^2 x_2 - 120 x_1^2 x_2 - 60 x_2^3 )e_1 \split + ( -120 x_0^2 x_1 - 60 x_1^3 - 120 x_1 x_2^2 ) e_2  \\[.5ex]
    \hline
  \end{array}
  \]
\caption{ Basic  type 1 inframonogenic polynomials of degree $3$.}
\label{tab:deg13}
\end{table}

\begin{table}[hb!]   \centering
  \[
   \begin{array}{l|l}   \hline
    \Z^+_{3,0} &   ( -6 x_0^2 x_2 + \frac{3}{2} x_1^2 x_2 + \frac{3}{2} x_2^3 )e_1 + ( 6 x_0^2 x_1 + \frac{-3}{2} x_1^3 + \frac{-3}{2} x_1 x_2^2 ) e_2  \\[.5ex]
 \Z^+_{3,1} &  \frac{3942}{1939} x_0^2 x_1 + \frac{-2997}{7756} x_1^3 + \frac{-2997}{7756} x_1 x_2^2 + ( \frac{3537}{1939} x_0^3 + \frac{-4617}{7756} x_0 x_1^2 + \frac{-41607}{7756} x_0 x_2^2 )e_1 \split + ( \frac{18495}{3878} x_0 x_1 x_2 + \frac{3}{28} 
   ) e_2  \\[.5ex]
 \Z^+_{3,2} &  \frac{-18765}{1001} x_0 x_1^2 + \frac{18765}{1001} x_0 x_2^2 + ( \frac{-17145}{1001} x_0^2 x_1 + \frac{1620}{1001} x_1^3 + \frac{2835}{143} x_1 x_2^2 )e_1 \split + ( \frac{17145}{1001} x_0^2 x_2 + \frac{-2835}{143} x_1^2 x_2 + \frac{-1620}{1001} x_2^3 + \frac{1}{14} 
  ) e_2  \\[.5ex]
 \Z^+_{3,3} & \frac{405}{4} x_1^3 + \frac{-1215}{4} x_1 x_2^2 + ( \frac{405}{4} x_0 x_1^2 + \frac{-405}{4} x_0 x_2^2 )e_1 +  ( \frac{-405}{2} x_0 x_1 x_2 + \frac{1}{28} 
   ) e_2  \\[.5ex]
 \Z^-_{3,1} &  \frac{3942}{1939} x_0^2 x_2 + \frac{-2997}{7756} x_1^2 x_2 + \frac{-2997}{7756} x_2^3 + ( \frac{18495}{3878} x_0 x_1 x_2 )e_1\split  + ( \frac{3537}{1939} x_0^3 + \frac{-41607}{7756} x_0 x_1^2 + \frac{-4617}{7756} x_0 x_2^2 + \frac{3}{28} 
   ) e_2  \\[.5ex]
 \Z^-_{3,2} &  \frac{-37530}{1001} x_0 x_1 x_2 + ( \frac{-17145}{1001} x_0^2 x_2 + \frac{-14985}{2002} x_1^2 x_2 + \frac{21465}{2002} x_2^3 )e_1 \split + ( \frac{-17145}{1001} x_0^2 x_1 + \frac{21465}{2002} x_1^3 + \frac{-14985}{2002} x_1 x_2^2 + \frac{1}{14} 
   ) e_2  \\[.5ex]
 \Z^-_{3,3} &  \frac{1215}{4} x_1^2 x_2 + \frac{-405}{4} x_2^3 + ( \frac{405}{2} x_0 x_1 x_2 )e_1 + ( \frac{405}{4} x_0 x_1^2 + \frac{-405}{4} x_0 x_2^2 + \frac{1}{28} 
   ) e_2  \\[.5ex]  \hline
 \end{array}
\]
\caption{ Basic type  2 inframonogenic polynomials of degree $3$.}
\label{tab:deg23}
\end{table}

\begin{table}[!tb]   \centering
\[
 \begin{array}{l|l}   \hline
 Y^+_{4,0} &  32 x_0^4 + 12 x_0^2 x_1^2 - 15 x_1^4 + 12 x_0^2 x_2^2 - 30 x_1^2 x_2^2 - 15 x_2^4 \split + ( 28 x_0^3 x_1 + 33 x_0 x_1^3 + 33 x_0 x_1 x_2^2 )e_1 + ( 28 x_0^3 x_2 + 33 x_0 x_1^2 x_2 + 33 x_0 x_2^3 ) e_2  \\[.5ex]
 Y^+_{4,1} &  -160 x_0^3 x_1 - 150 x_0 x_1^3 - 150 x_0 x_1 x_2^2  \split + ( 80 x_0^4 + 30 x_0^2 x_1^2 - 60 x_1^4 + 30 x_0^2 x_2^2 - 75 x_1^2 x_2^2 - 15 x_2^4 )e_1 \split + ( -45 x_1^3 x_2 - 45 x_1 x_2^3 ) e_2  \\[.5ex]
 Y^+_{4,2} &  180 x_0^2 x_1^2 + 240 x_1^4 - 180 x_0^2 x_2^2 - 240 x_2^4 \split + ( -540 x_0^3 x_1 - 480 x_0 x_1^3 - 180 x_0 x_1 x_2^2 )e_1 + \split ( 540 x_0^3 x_2 + 180 x_0 x_1^2 x_2 + 480 x_0 x_2^3 ) e_2  \\[.5ex]
 Y^+_{4,3} &  420 x_0 x_1^3 - 1260 x_0 x_1 x_2^2 + ( 1260 x_0^2 x_1^2 + 840 x_1^4 - 1260 x_0^2 x_2^2 - 630 x_1^2 x_2^2 - 630 x_2^4 )e_1 \split + ( -2520 x_0^2 x_1 x_2 - 1050 x_1^3 x_2 - 1890 x_1 x_2^3 ) e_2  \\[.5ex]
 Y^-_{4,1} &  -160 x_0^3 x_2 - 150 x_0 x_1^2 x_2 - 150 x_0 x_2^3 + ( -45 x_1^3 x_2 - 45 x_1 x_2^3 )e_1 \split+ ( 80 x_0^4 + 30 x_0^2 x_1^2 - 15 x_1^4 + 30 x_0^2 x_2^2 - 75 x_1^2 x_2^2 - 60 x_2^4 ) e_2  \\[.5ex]
 Y^-_{4,2} &  360 x_0^2 x_1 x_2 + 480 x_1^3 x_2 + 480 x_1 x_2^3 + ( -540 x_0^3 x_2 - 630 x_0 x_1^2 x_2 - 330 x_0 x_2^3 )e_1 \split+ ( -540 x_0^3 x_1 - 330 x_0 x_1^3 - 630 x_0 x_1 x_2^2 ) e_2  \\[.5ex]
   Y^-_{4,3} &  1260 x_0 x_1^2 x_2 - 420 x_0 x_2^3 + ( 2520 x_0^2 x_1 x_2 + 1890 x_1^3 x_2 + 1050 x_1 x_2^3 )e_1 \split+ ( 1260 x_0^2 x_1^2 + 630 x_1^4 - 1260 x_0^2 x_2^2 + 630 x_1^2 x_2^2 - 840 x_2^4 ) e_2  \\[.5ex]\hline
 \end{array}
 \]
\caption{Basic type 1  inframonogenic polynomials of degree $4$.}
\label{tab:deg14}
\end{table}

\begin{table}[!b]   \centering
\[
 \begin{array}{l|l}   \hline  
 \Z^+_{4,0} &  ( -10 x_0^3 x_2 + \frac{15}{2} x_0 x_1^2 x_2 + \frac{15}{2} x_0 x_2^3 )e_1 + ( 10 x_0^3 x_1 + \frac{-15}{2} x_0 x_1^3 + \frac{-15}{2} x_0 x_1 x_2^2 ) e_2  \\[.5ex]
 \Z^+_{4,1} &  \frac{1826}{705} x_0^3 x_1 + \frac{-319}{188} x_0 x_1^3 + \frac{-319}{188} x_0 x_1 x_2^2 \split + ( \frac{1672}{705} x_0^4 + \frac{-1749}{940} x_0^2 x_1^2 + \frac{77}{1410} x_1^4 + \frac{-12089}{940} x_0^2 x_2^2 + \frac{715}{376} x_1^2 x_2^2 + \frac{10417}{5640} x_2^4 )e_1\split + ( 11 x_0^2 x_1 x_2 + \frac{-10109}{5640} x_1^3 x_2 + \frac{-10109}{5640} x_1 x_2^3 + \frac{4}{45} 
   ) e_2  \\[.5ex]
 \Z^+_{4,2} &  \frac{-42438}{947} x_0^2 x_1^2 + \frac{5918}{947} x_1^4 + \frac{42438}{947} x_0^2 x_2^2 + \frac{-5918}{947} x_2^4 \split+ ( \frac{-39358}{947} x_0^3 x_1 + \frac{8998}{947} x_0 x_1^3 + \frac{104940}{947} x_0 x_1 x_2^2 )e_1 \split+ ( \frac{39358}{947} x_0^3 x_2 + \frac{-104940}{947} x_0 x_1^2 x_2 + \frac{-8998}{947} x_0 x_2^3 + \frac{1}{15} 
  ) e_2  \\[.5ex]
 \Z^+_{4,3} &  \frac{12089}{54} x_0 x_1^3 + \frac{-12089}{18} x_0 x_1 x_2^2 \split+ ( \frac{3773}{18} x_0^2 x_1^2 + \frac{-385}{27} x_1^4 + \frac{-3773}{18} x_0^2 x_2^2 + \frac{-7931}{36} x_1^2 x_2^2 + \frac{3157}{36} x_2^4 )e_1 \split+ ( \frac{-3773}{9} x_0^2 x_1 x_2 + \frac{26873}{108} x_1^3 x_2 + \frac{-539}{12} x_1 x_2^3 + \frac{2}{45} 
  ) e_2  \\[.5ex]
 \Z^+_{4,4} &  -924 x_1^4 + 5544 x_1^2 x_2^2 - 924 x_2^4 + ( -924 x_0 x_1^3 + 2772 x_0 x_1 x_2^2 )e_1\split + ( 2772 x_0 x_1^2 x_2 - 924 x_0 x_2^3 + \frac{1}{45} 
   ) e_2  \\[.5ex]
 \Z^-_{4,1} &  \frac{1826}{705} x_0^3 x_2 + \frac{-319}{188} x_0 x_1^2 x_2 + \frac{-319}{188} x_0 x_2^3 \split+ ( 11 x_0^2 x_1 x_2 + \frac{-10109}{5640} x_1^3 x_2 + \frac{-10109}{5640} x_1 x_2^3 )e_1 \split+ ( \frac{1672}{705} x_0^4 + \frac{-12089}{940} x_0^2 x_1^2 + \frac{10417}{5640} x_1^4 + \frac{-1749}{940} x_0^2 x_2^2 + \frac{715}{376} x_1^2 x_2^2 + \frac{77}{1410} x_2^4 + \frac{4}{45} 
   ) e_2  \\[.5ex]
 \Z^-_{4,2} &  \frac{-84876}{947} x_0^2 x_1 x_2 + \frac{11836}{947} x_1^3 x_2 + \frac{11836}{947} x_1 x_2^3 \split + ( \frac{-39358}{947} x_0^3 x_2 + \frac{-38973}{947} x_0 x_1^2 x_2 + \frac{56969}{947} x_0 x_2^3 )e_1 \split+ ( \frac{-39358}{947} x_0^3 x_1 + \frac{56969}{947} x_0 x_1^3 + \frac{-38973}{947} x_0 x_1 x_2^2 + \frac{1}{15} 
  ) e_2  \\[.5ex]
 \Z^-_{4,3} &  \frac{12089}{18} x_0 x_1^2 x_2 + \frac{-12089}{54} x_0 x_2^3 + ( \frac{3773}{9} x_0^2 x_1 x_2 + \frac{539}{12} x_1^3 x_2 + \frac{-26873}{108} x_1 x_2^3 )e_1\split + ( \frac{3773}{18} x_0^2 x_1^2 + \frac{-3157}{36} x_1^4 + \frac{-3773}{18} x_0^2 x_2^2 + \frac{7931}{36} x_1^2 x_2^2 + \frac{385}{27} x_2^4 + \frac{2}{45} 
   ) e_2  \\[.5ex]
 \Z^-_{4,4} &  -3696 x_1^3 x_2 + 3696 x_1 x_2^3 + ( -2772 x_0 x_1^2 x_2 + 924 x_0 x_2^3 )e_1 \split + ( -924 x_0 x_1^3 + 2772 x_0 x_1 x_2^2 + \frac{1}{45} 
  ) e_2  \\[.5ex] \hline
 \end{array}
\]
\caption{Basic  type 2 inframonogenic polynomials of degree $4$.}
\label{tab:deg24}
\end{table}

The next step is to write out the components of
$X^\pm_{n,m}$, $Y^\pm_{n,m}$, $\Z^\pm_{n,m}$ explicitly.  In the
interpretation of the following formulas, when $m=n$, the terms
$U^\pm_{n,m+1}$ must be understood with the interpretation
$U^\pm_{n,n+1}=0$.  For $m=0$ the terms $U^\pm_{n,m-1}$ must be
replaced using the formula
\begin{align}
   U^\pm_{n,-1} = \mp  \frac{ 1}{n(n+1)} \, U^\pm_{n,1} .
\end{align}
This can be proved using properties of the associated Legendre
functions, or simply taken as a definition. Thus, for example, we have
\cite{MoraisGurlebeck2012}
\begin{align*}
 X^\pm_{0,m} & \, =  (m+1) U^\pm_{0,m}  
   +  \frac{1}{2} (m(m+1) U^\pm_{0,m-1} - U^\pm_{0,m+1})e_1 \\[-0.5ex]
 & \qquad \mp \frac{1}{2} (m(m+1) U^\mp_{0,m-1} + U^\mp_{0,m+1})e_2 ,\\[0.5ex]
 X^\pm_{n,n+1} &= (2n+1)(n+1) U^\pm_{n,n} e_1
  \pm (2n+1)(n+1) U^\pm_{n,n} e_2.
\end{align*}
In what follows, for brevity, we will not write out the explicit simplified formulas for $m=0$ and
$m=n$, understanding that the above
substitutions must be carried out when appropriate.

\begin{prop} \label{prop:components}  Let $n\ge2$.
  The inframonogenic polynomials of type 0 can be expressed as
  \begin{align*}
   X^\pm_{n,m} =& \ (n+m+1) U^\pm_{n,m} +\frac{1}{2} \big( (n+m)(n+m+1) U^\pm_{n,m-1} - U^\pm_{n,m+1} \big)e_1 \\   
 &\mp \frac{1}{2} \big( (n+m)(n+m+1)U^\mp_{n,m-1} + U^\mp_{n,m+1} \big)e_2.
\end{align*}
  The inframonogenic polynomials of type 1 can be expressed as
  \begin{align*}
    (2n-1)Y^{\pm}_{n,m}  =& \ 2(n-2m^2)  \, U^{\pm}_{n,m} + 2(2n+1)(n+m)(n-1+m) \,
        |x|^2 U^{\pm}_{n-2,m}   \\[1ex]
  &+ \big(  (n+m)(n-m+1)(2m-1)  \, U^{\pm}_{n,m-1} \\
   &\quad + (2n+1)(n+m)(n+m-1)(n+m-2) \, |x|^2 U^{\pm}_{n-2,m-1}   \\
   &\quad + (2m+1) \, U^{\pm}_{n,m+1} - (2n+1)(n+m)\, |x|^2 U^{\pm}_{n-2,m+1}
  \big)\,e_1   \\[1ex]
   &\mp \big(   (n+m)(n-m+1)(2m-1) \, U^{\mp}_{n,m-1} \\
   &\quad +(2n+1)(n+m)(n+m-1)(n+m-2) \, |x|^2 U^{\mp}_{n-2,m-1}   \\
   &\quad -(2m+1) \, U^{\mp}_{n,m+1} \mp (2n+1)(n+m) \, |x|^2  U^{\mp}_{n-2,m+1}   \big)\,e_2 .
  \end{align*}   
  The inframonogenic polynomials of type 2  can be expressed as
  \begin{align*}
     \Z^+_{n,0}   =&\   U^-_{n,1} e_1 - U^+_{n,1} e_2,  \\[1ex]
   (2n-1) \Z^{\pm}_{n,m}  =&\ -(2m-1)(2m+1) \, U^{\pm}_{n,m} \\[1ex]
      &+ (2n+3)(n+m)(n+m-1) \, |x|^2 U^{\pm}_{n-2,m}   \\[1ex]
 & +  \big(  (n+m)(n-m+1)(2m-1)  \, U^{\pm}_{n,m-1} \\
&\quad + \frac{(2n+3)(n+m)(n+m-1)(n+m-2)}{2} \, |x|^2 U^{\pm}_{n-2,m-1}  \\
&\quad +  (2m+1) \, U^{\pm}_{n,m+1} - \frac{(2n+3)(n+m)}{2} \, |x|^2 U^{\pm}_{n-2,m+1}  \big)\,e_1 \\[1ex]
   & \mp \big(  (n+m)(n-m+1)(2m-1) \, U^{\mp}_{n,m-1} \\
&\quad + \frac{(2n+3)(n+m)(n+m-1)(n+m-2)}{2} \, |x|^2 U^{\mp}_{n-2,m-1}  \\
&\quad -  (2m+1)  \, U^{\mp}_{n,m+1} \mp \frac{(2n+3)(n+m)}{2} \, |x|^2 U^{\mp}_{n-2,m+1} \big)\,e_2  .
\end{align*}
\end{prop}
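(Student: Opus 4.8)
The plan is to reduce every assertion to two families of recursion relations for the solid spherical harmonics \eqref{eq:esphecoor}: the \emph{differentiation} relations expressing $\partial_0 U^\pm_{n,m}$, $\partial_1 U^\pm_{n,m}$, $\partial_2 U^\pm_{n,m}$ as linear combinations of $U^\pm_{n-1,m}$ and $U^\pm_{n-1,m\pm1}$ (with the sign swap $U^+\leftrightarrow U^-$ in the $\partial_2$ relation, since $\partial_2$ turns $\cos m\varphi$ into $\sin m\varphi$ and conversely), and the \emph{coordinate-multiplication} relations expressing $x_0 U^\pm_{n,m}$, $x_1 U^\pm_{n,m}$, $x_2 U^\pm_{n,m}$ as a degree-raising term in $U^\pm_{n+1,\cdot}$ plus a non-harmonic term $|x|^2 U^\pm_{n-1,\cdot}$. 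Both families are classical consequences of the recurrences for the associated Legendre functions $P_n^m$; I would first record them, with their exact coefficients, as a preparatory lemma, together with the boundary conventions $U^\pm_{n,n+1}=0$ and the definition of $U^\pm_{n,-1}$ already fixed in the text.

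With these in hand, the type-0 formula is immediate: applying $\partialbar=\partial_0+\partial_1 e_1+\partial_2 e_2$ to $U^\pm_{n+1,m}$ and sorting the result into its $e_0$, $e_1$, $e_2$ parts via the three differentiation relations produces exactly the stated expansion of $X^\pm_{n,m}$. This recovers the known formulas of \cite{MoraisGurlebeck2012}; the special cases displayed just before the proposition are the instances $n=0$ and $m=n+1$.

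For types 1 and 2 the key algebraic reduction is the reduced-quaternion identity
\[
  \overline x\,X + X\,\overline x = 2\langle x,X\rangle + 2\big(x_0\,\vc X - X_0\,\vc x\big),
\]
valid for any $X=X_0+\vc X$ with $\vc X=X_1e_1+X_2e_2$, which follows from $\overline x=x_0-\vc x$ and the relation $\vc a\,\vc b+\vc b\,\vc a=-2\langle \vc a,\vc b\rangle$ for pure reduced quaternions. Substituting $X=X^\pm_{n-1,m}$ from the type-0 formula and expanding every product $x_i\,U^\pm_{n-1,\cdot}$ by the multiplication relations splits each term into a $U^\pm_{n,\cdot}$ contribution and a $|x|^2U^\pm_{n-2,\cdot}$ contribution; the latter then merges with the explicit $|x|^2X^\pm_{n-2,m}$ term present in the definitions of $Y^\pm_{n,m}$ and $\Z^\pm_{n,m}$ (and, for $\Z$, with the bare $-U^\pm_{n,m}$ correction). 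Collecting the $e_0$, $e_1$, $e_2$ components and simplifying the accumulated coefficients yields the stated closed forms. The exceptional element $\Z^+_{n,0}$ is handled separately by a short direct computation: expanding $(e_2\partialbar+\partialbar e_2)U^+_{n+1,1}-(e_1\partialbar+\partialbar e_1)U^-_{n+1,1}$ with the differentiation relations and invoking Lemma \ref{lemm:contrag1} collapses it to $U^-_{n,1}e_1-U^+_{n,1}e_2$.

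The main obstacle is not conceptual but the coefficient bookkeeping: each of $Y^\pm_{n,m}$ and $\Z^\pm_{n,m}$ gathers contributions from several recursion-generated terms carrying $(n\pm m)$-type prefactors, and one must verify that after summation they collapse to the single closed-form coefficients stated, including the correct overall factors of $(2n-1)$ on the left. I would organize this by treating the harmonic ($U^\pm_{n,\cdot}$) and non-harmonic ($|x|^2U^\pm_{n-2,\cdot}$) parts in parallel, and by tracking the three index shifts $m-1,m,m+1$ separately. Care is also needed at the boundaries $m=0$ and $m=n$, where the conventions $U^\pm_{n,n+1}=0$ and the formula for $U^\pm_{n,-1}$ must be inserted; the low-degree cases tabulated in Tables \ref{tab:deg2}--\ref{tab:deg24} provide a convenient numerical check on the final coefficients.
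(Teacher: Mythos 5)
Your proposal is correct and follows essentially the same route as the paper: the paper likewise verifies the type 1 and type 2 formulas by expanding $\overline{x}\,X^\pm_{n-1,m}+X^\pm_{n-1,m}\,\overline{x}$ componentwise (your anticommutator identity is precisely the content of its first displayed spherical-coordinate expansion) and then reducing via the associated Legendre recurrences \eqref{eq:rec1}--\eqref{eq:rec4}, which are exactly what your preparatory differentiation and coordinate-multiplication relations for the $U^\pm_{n,m}$ encode. Like you, the paper treats the $e_0$, $e_1$, $e_2$ components and the index shifts $m-1$, $m$, $m+1$ separately, cites \cite{MoraisGurlebeck2012} for the type 0 formulas, and dispatches $\Z^+_{n,0}$ and $\Z^\pm_{n,m}$ as routine variants of the same computation.
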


\begin{proof}
  The expressions for $X^\pm_{n,m}$ are well known
  \cite{MoraisGurlebeck2012}. Now consider $Y^+_{n,m}$.  By
  (\ref{eq:esphecoor}),
\begin{align*}
  \overline{x}\,& X^+_{n-1,m}   + X^+_{n-1,m} \, \overline{x}
       = \cos(m\varphi)\,\big(\, 2(n+m)\cos\theta \, P_{n-1}^m(\cos\theta) \\
  &\qquad +(n+m-1)(n+m)\sin\theta\, P_{n-1}^{m-1}(\cos\theta)- \sin\theta \, 
 P_{n-1}^{m+1}(\cos\theta)\, \big)    \\  
  & + \big(\, (n+m-1)(n+m)\cos\theta \cos((m-1)\varphi)\, P_{n-1}^{m-1}(\cos\theta) \\
  &\ \ -\cos\theta \cos((m+1)\varphi)\, P_{n-1}^{m+1}(\cos\theta) \\
  &\ \ -2(n+m)\sin\theta \cos\varphi \cos(m\varphi)\, P_{n-1}^{m}(\cos\theta)\, \big)\ e_1  \\
  & - \big(\, (n+m-1)(n+m)\cos\theta \sin((m-1)\varphi)\, P_{n-1}^{m-1}(\cos\theta) \\
  &\ \ +\cos\theta \sin((m+1)\varphi)\, P_{n-1}^{m+1}(\cos\theta) \\
  &\ \ -2(n+m)\sin\theta \sin\varphi \cos(m\varphi)\, P_{n-1}^{m}(\cos\theta)\, \big)\ e_2 .
\end{align*} 
From this the scalar part of $Y^+_{n,m}$ is
\begin{align*}
  [Y^+_{n,m} ]_0 &=
  \cos(m\varphi)\, \big(\, 2(n+m)\cos\theta \ P_{n-1}^m(\cos \theta) \\
  &\ \  +(n+m-1)(n+m)\sin\theta \, P_{n-1}^{m-1}(\cos\theta) - \sin\theta \,
    P_{n-1}^{m+1}(\cos\theta) \\
  &\ \  +2(n+m)(n+m-1)\  P_{n-2}^{m}(\cos\theta)\, \big) .
\end{align*}
The well-known recurrence relations for the associated
Legendre functions
\cite{AbramowitzStegun1964,Bateman1953,Hobson1931,Sansone1959} 
are useful for manipulating expressions involving solid spherical
harmonics. We apply
\begin{align} \label{eq:rec1}
  \dfrac{1}{\sqrt{1-t^2}} P_n^m(t) =
  \dfrac{-1}{2m}(P_{n-1}^{m+1}(t)+(n+m-1)(n+m) P_{n-1}^{m-1}(t))
\end{align}
to obtain
\begin{align*}
  [Y^+_{n,m} ]_0 &=
   \cos(m\varphi) \, \big(\, 2(n+m)\cos\theta \, P_{n-1}^m(\cos \theta)-2m \,
    P_{n}^{m}(\cos\theta)\\
   &\ \ - 2\sin\theta \, P_{n-1}^{m+1}(\cos\theta) + 2(n+m)(n+m-1)\,
     P_{n-2}^{m}(\cos\theta) \,\big),
\end{align*}
by (\ref{eq:rec2})
which via the further relation
\begin{align}\label{eq:rec2}
\sqrt{1-t^2} P_n^{m+1}(t)=(n-m)t  P_{n}^{m}(t)-(n+m) P_{n-1}^{m}(t)
\end{align}
simplifies to
\begin{align*}
  [ Y^+_{n,m} ]_0 &= \cos(m\varphi) \,
                    \big(\, 2(2m+1)\cos\theta \,
  P_{n-1}^m(\cos \theta)-2m \, P_{n}^{m}(\cos\theta)\\
  &  +2(n+m-1)(n+m+1)\, P_{n-2}^{m}(\cos\theta) \big).
\end{align*}
Now apply the relation
\begin{align}\label{eq:rec3}
(n-m+1) P_{n+1}^{m}(t)=(2n+1)t P_{n}^{m}(t)-(n+m) P_{n-1}^{m}(t)
\end{align}
twice, yielding
\begin{align*}
[Y^+_{n,m} ]_0 &= \cos(m\varphi)\ (\ 2(n+m)(2n+1)\cos\theta \ P_{n-1}^m(\cos \theta) \\
&\ \ +2(m^2-n(n+1)) \ P_{n}^{m}(\cos\theta)\\
&=\dfrac{1}{2n-1} \big(2(n-2m^2)U_{n,m}^+ +2(n+m)(2n+1)(n+m-1)\ |x|^2 U_{n-2,m}^+ \big)
\end{align*}
as required. Next, the $e_1$ component of $\Y^+_{n,m}$ is
\begin{align*}
  [Y^+_{n,m}]_1 &= (n+m-1)(n+m)\cos\theta \cos((m-1)\varphi) \, 
   P_{n-1}^{m-1}(\cos\theta)\\
 & \quad \ \ -\cos\theta \cos((m+1)\varphi)\, P_{n-1}^{m+1}(\cos\theta) \\
 &\quad \ \ -2(n+m)\sin\theta \cos\varphi \cos (m\varphi)\, P_{n-1}^{m}(\cos\theta)\\
 & \quad \ \ +2(n+m)(n+m-1)(n+m-2)\ P_{n-2}^{m-1}(\cos\theta)\cos ((m-1)\varphi)\\
 & \quad \ \ -2(n+m)\ P_{n-2}^{m+1}(\cos\theta)\cos ((m+1)\varphi).
\end{align*}
Since $\cos(m\varphi)\cos\varphi=\dfrac{1}{2}(\cos((m+1)\varphi)+\cos((m-1)\varphi))$,
this is
\begin{align*}
  [Y^+_{n,m}]_1 &= (n+m) \cos((m-1)\varphi)\,
   \big( (n+m-1)\cos \theta \, P_{n-1}^{m-1}(\cos\theta)\\
 & \quad \ \ +2(n+m-1)(n+m-2)\, P_{n-2}^{m-1}(\cos\theta)-\sin\theta\,  P_{n-1}^{m}(\cos\theta)\, \big)\\
 & \quad \ \ -\cos ((m+1)\varphi)\, \big( \cos\theta \, P_{n-1}^{m+1}(\cos\theta)+2(n+m)\, P_{n-2}^{m+1}(\cos\theta)\\
 & \quad \ \ +(n+m)\sin\theta P_{n-1}^{m}(\cos\theta)\,  \big).
\end{align*}
The desired result is now obtained with \eqref{eq:rec3} together with the additional
recurrence relations  
\begin{align}\label{eq:rec4}
  \sqrt{1-t^2} P_n^{m}(t) &=
   \dfrac{1}{2n+1}((n-m+1)(n-m+2) P_{n+1}^{m-1}(t) \\
    &\quad -(n+m-1)(n+m) P_{n-1}^{m-1}(t) ),  \nonumber\\
 \sqrt{1-t^2} P_n^{m}(t)&=-\dfrac{1}{2n+1}(P_{n+1}^{m+1}(t)- P_{n-1}^{m+1}(t) ) .
\end{align}
The  $e_2$ component of $Y^+_{n,m}$ is calculated in the same way. The expression for $\Z^+_{n,0}$ is elementary and we omit the proof. We also omit the computations 
for $\Z^{\pm}_{n,m}$, which require no new ideas.
\end{proof}

In order to compute the representation of a given function in terms of the elements of an orthogonal basis, it is necessary to know the norms of the basis elements. The following result contains this information.
\begin{theo} \label{theo:products}
  The norms squared of the basic inframonogenic polynomials in $L^2(B_1(0))$
  are equal to 
\begin{align*}
  \|X^+_{n,0}\|^2_2 &= \frac{4\pi \, (n+1)}{2n+3},\\
  \|X^\pm_{n,m}\|^2_2 &= \frac{2\pi \, (n+1)}{(2n+3)}
                        \frac{(n+1+m)!}{(n+1-m)!},  \quad 1\le m\le n+1;\\
  \|Y^+_{n,0}\|^2_2 &= \frac{8\pi \, n}{(2n-3)(2n+1)(2n+3)} \Bigl((2n-3)(3n+1)  \\[-1.0ex]
  &\hspace{.4\textwidth} + (2n+1)^3n(n-1)(3n-4)\Bigr), \\
  \|Y^\pm_{n,m}\|^2_2 &= \frac{4\pi}{(2n-1)^2(2n+1)(2n+3)}
                        \frac{(n+m)!}{(n-m)!} \Bigl( (2n+1)^3(n^2-m^2)(n-1) \\
  &\hspace{.1\textwidth} + 2(n-2m^2)^2 + (n+m)(n-m+1)^2(2m-1)^2 \\
  &\hspace{.1\textwidth} + (n-m)(n+m+1)^2(2m+1)^2 \Bigr), \quad 1\le m \le n-1;   \\
  \|\Z^+_{n,0}\|^2_2 &= \frac{4\pi \, n(n+1)}{(2n+1)(2n+3)}, \\
  \|\Z^\pm_{n,m}\|^2_2 &= \frac{\pi}{(2n-1)^2(2n+1)(2n+3)} \frac{(n+m)!}{(n-m)!}  \Bigl((2m-1)^2 \left(8m+(2n+1)^2 \right) \\
  &\hspace{-.01\textwidth}  - (2m+1)^2 \left(8m-(2n+1)^2 \right) + (n-1)(2n+1)(2n+3)^2(n^2-m^2)\Bigr), \quad 1\le m \le n.
  \end{align*}
Further, $\langle X^\pm_{n,n}, Y^\pm_{n,n} \rangle=0$, $\langle X^+_{n,0}, \Z^+_{n,0} \rangle=0$, and $\langle Y^\pm_{n,m}, \Z^+_{n,0} \rangle=0$ for $0 \le m \le n-1$. The only other scalar products of the $X^\pm_{n,m}$, $Y^\pm_{n,m}$, or $\Z^\pm_{n,m}$ which are not zero are the
following:
\begin{align*}
  \langle X^\pm_{n,m}, \Z^\pm_{n,m} \rangle &=
  -\frac{2\pi}{(2n+1)(2n+3)} \frac{(n+m+1)!}{(n-m)!}, \quad 1\le m \le n,\\
\langle Y^\pm_{n,m}, \Z^\pm_{n,m} \rangle &= \frac{4\pi}{(2n-3)(2n-1)^2(2n+1)} \frac{(n+m)!}{(n-m)!} \Bigl((2n-3) \left( 4(n-1)m^2+n\right) \\
&\hspace{.12\textwidth}  + (2n+1)^2 (n^2-m^2) \left((n-1)^2 + m(m-1)\right)\Bigr), \quad 1\le m \le n-1.
\end{align*}
\end{theo}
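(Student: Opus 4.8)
The plan is to reduce every norm and inner product to the single master formula in Lemma \ref{lemma_orthogonality_spherical_harmonics}, which evaluates $\langle \rho^{2k}U^{\pm}_{n,m},\rho^{2k'}U^{\pm}_{n',m'}\rangle$ in closed form. The explicit component expansions in Proposition \ref{prop:components} write each of $X^{\pm}_{n,m}$, $Y^{\pm}_{n,m}$, $\Z^{\pm}_{n,m}$ as a real linear combination of solid spherical harmonics $U^{\pm}_{n,m'}$ and of $|x|^2U^{\pm}_{n-2,m'}=\rho^2 U^{\pm}_{n-2,m'}$, with $m'\in\{m-1,m,m+1\}$. First I would substitute these expansions into the scalar inner product \eqref{scalar-inner-product}, which splits as a sum over the three components $e_0,e_1,e_2$; each component is itself a sum of pairwise products of harmonics of the stated form. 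The key orthogonality structure, immediate from the $\delta_{n,n'}\delta_{m,m'}$ factors and the linear independence of $\cos(m\varphi),\sin(m\varphi)$, is that only terms with matching degree $n$ and matching order and parity $(m',\pm)$ survive. This is what forces almost all cross products between distinct basis elements to vanish and reduces each surviving quantity to a short finite sum.

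Next I would organize the surviving terms by the value of $k+k'$ appearing in Lemma \ref{lemma_orthogonality_spherical_harmonics}: products of two ``pure'' harmonics $U^{\pm}_{n,m'}$ contribute with $k=k'=0$, products of a pure harmonic with a $\rho^2$-weighted harmonic contribute with $k+k'=1$, and products of two $\rho^2$-weighted harmonics (with index shifted to $n-2$) contribute with $k+k'=2$ but degree $n-2$, so the denominator $2(k+k')+(n+n')+3$ still evaluates consistently. For each norm $\|X^{\pm}_{n,m}\|^2$, $\|Y^{\pm}_{n,m}\|^2$, $\|\Z^{\pm}_{n,m}\|^2$ I would collect the resulting rational-times-factorial terms, factor out the common $\frac{(n+m)!}{(n-m)!}$ (taking care of the index shifts $m\mapsto m\pm1$ in the off-diagonal harmonics, which change the factorial ratio by explicit polynomial factors such as $(n+m)(n+m+1)$), and combine over the common denominator $(2n-1)^2(2n+1)(2n+3)$ or its analogue. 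The special cases $m=0$ and $m=n$ must be handled separately using the stated conventions $U^{\pm}_{n,n+1}=0$ and $U^{\pm}_{n,-1}=\mp\frac{1}{n(n+1)}U^{\pm}_{n,1}$; in particular $\Z^+_{n,0}=U^-_{n,1}e_1-U^+_{n,1}e_2$ has only two harmonic terms, so its norm and the orthogonalities $\langle X^+_{n,0},\Z^+_{n,0}\rangle=0$, $\langle Y^{\pm}_{n,m},\Z^+_{n,0}\rangle=0$ follow directly from order mismatch or the explicit two-term structure.

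The vanishing claims split into two kinds. The ``structural'' vanishings (products of elements of different degree $n$, or different order $m$, or opposite parity sign) follow at once from the Kronecker deltas and from $\langle U^+_{n,m'},U^-_{n,m'}\rangle=0$, so no computation is needed there. The more delicate vanishings are the within-degree, within-order ones: $\langle X^{\pm}_{n,n},Y^{\pm}_{n,n}\rangle=0$ and $\langle X^+_{n,0},\Z^+_{n,0}\rangle=0$, together with the assertion that the listed nonzero products are the \emph{only} ones. These require actually evaluating the short surviving sum and checking that the factorial-weighted combination cancels; this is where I expect the main obstacle to lie, since it is an exact algebraic cancellation among several terms with differing factorial ratios and it is easy to lose a factor in the index shifts. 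I would verify these by expressing everything in the single ratio $\frac{(n+m)!}{(n-m)!}$, reducing the shifted ratios via the identities $\frac{(n+m+1)!}{(n-m-1)!}=(n+m+1)(n-m)\frac{(n+m)!}{(n-m)!}$ and $\frac{(n+m-1)!}{(n-m+1)!}=\frac{1}{(n+m)(n-m+1)}\frac{(n+m)!}{(n-m)!}$, and then confirming the polynomial numerator collapses as claimed. The clean factored forms given for $\langle X^{\pm}_{n,m},\Z^{\pm}_{n,m}\rangle$ and $\langle Y^{\pm}_{n,m},\Z^{\pm}_{n,m}\rangle$ serve as a useful consistency check at small $n,m$ against Tables \ref{tab:deg2}--\ref{tab:deg24}.
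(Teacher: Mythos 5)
Your plan is essentially the paper's own proof: expand each basis element into solid spherical harmonics $U^\pm_{n,m'}$ and $|x|^2$-weighted harmonics via Proposition \ref{prop:components}, then evaluate every norm and inner product term-by-term with the master formula of Lemma \ref{lemma_orthogonality_spherical_harmonics}, handling the $m=0$, $m=n$ conventions and the $\Z^+_{n,0}$ case separately, exactly as the paper does. One small correction to your bookkeeping: the mixed products of a pure $U^\pm_{n,m'}$ with $|x|^2 U^\pm_{n-2,m'}$ do not genuinely contribute at $k+k'=1$ --- they vanish outright by the $\delta_{n,n'}$ factor (equivalently, by orthogonality of $P_n^{m}$ and $P_{n-2}^{m}$ of equal order), which is why the paper's displayed expansions for $\|Y^\pm_{n,m}\|_2^2$ and $\|\Z^\pm_{n,m}\|_2^2$ contain only the diagonal squared terms.
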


\begin{proof}
  From
  \eqref{scalar-inner-product}--\eqref{L2norms-solid-harmonics-general},
  and Proposition \ref{prop:components}, it follows upon breaking down the integrand that
\begin{align*}
\langle X^\pm_{n,n}, \, Y^\pm_{n,n} \rangle =& \, -2n(2n+1) \int\!\!\int\!\!\int_{B_1(0)} \, (U^\pm_{n,n})^2 \, dV \\
&+ \, 4n(2n+1)^2 \int\!\!\int\!\!\int_{B_1(0)} \, U^\pm_{n,n} \, |x|^2 U^\pm_{n-2,n} \, dV \\
&+ \, 2n^2(2n+1) \int\!\!\int\!\!\int_{B_1(0)} \, \left((U^\pm_{n,n-1})^2 + (U^\mp_{n,n-1})^2 \right) dV \\
= & \, 0, \\
\langle X^+_{n,0}, \, \Z^+_{n,0} \rangle =& \, \int\!\!\int\!\!\int_{B_1(0)} \left( -U^+_{n,1} U^-_{n,1} + U^-_{n,1} U^+_{n,1} \right) dV = 0
\end{align*}
and
\begin{align*}
&\langle Y^\pm_{n,m}, \, \Z^+_{n,0} \rangle \\
&= \, \frac{(n+m)(n-m+1)(2m-1)}{2n-1} \int\!\!\int\!\!\int_{B_1(0)} \left(U^\pm_{n,m-1} \, U^-_{n,1} \pm U^\mp_{n,m-1} \, U^+_{n,1} \right) dV \\
&\quad + \, \frac{(2n+1)(n+m)(n+m-1)(n+m-2)}{2n-1} \int\!\!\int\!\!\int_{B_1(0)} \, (|x|^2 U^\pm_{n-2,m-1} \, U^-_{n,1}\\
  &\hspace{.6\textwidth} \pm |x|^2 U^\mp_{n-2,m-1} \, U^+_{n,1} ) dV \\
  &\quad + \, \frac{(2m+1)}{2n-1} \int\!\!\int\!\!\int_{B_1(0)} \, (U^\pm_{n,m+1} \, U^-_{n,1}   \mp U^\mp_{n,m+1} \, U^+_{n,1} ) dV \\
 &\quad - \, \frac{(2n+1)(n+m)}{2n-1} \int\!\!\int\!\!\int_{B_1(0)} \, (|x|^2 U^\pm_{n-2,m+1} \, U^-_{n,1} + |x|^2 U^\mp_{n-2,m+1} \, U^+_{n,1} ) dV \\
&= \, 0.
\end{align*}
Analogously, straightforward computations show that
\begin{align*}
\langle X^\pm_{n,m}, \, \Z^\pm_{n,m} \rangle =& \, -\frac{(n+m+1)(4m^2-1)}{2n-1} \, \|U^{\pm}_{n,m}\|_2^2 \\
&- \, \frac{2m+1}{2(2n-1)} \left(\|U^{\pm}_{n,m+1}\|_2^2 + \|U^{\mp}_{n,m+1}\|_2^2 \right) \\
&+ \, \frac{(n+m)^2(n+m+1)(n-m+1)(2m-1)}{2(2n-1)} \left(\|U^{\pm}_{n,m-1}\|_2^2 + \|U^{\mp}_{n,m-1}\|_2^2 \right) \\
\end{align*}
for $1\le m \le n$, and
\begin{align*}
(2n-1)^2 & \langle Y^\pm_{n,m}, \, \Z^\pm_{n,m}  \rangle =  \,  -2(n-2m^2)(4m^2-1) \, \|U^{\pm}_{n,m}\|_2^2 \\
&+ \, 2(2n+1)(2n+3)(n+m)^2(n+m-1) \, \||x|^2 U^\pm_{n-2,m}\|^2_2 \\
& + \, 2(n+m)^2(n-m+1)^2(2m-1)^2 \, \|U^\pm_{n,m-1}\|^2_2 \\
&+ \, (2n+1)(2n+3)(n+m)^2(n+m-1)^2(n+m-2)^2 \, \||x|^2 U^\pm_{n-2,m-1}\|^2_2 \\
&+ \, 2(2m+1)^2 \, \|U^\pm_{n,m+1}\|^2_2 + (2n+1)(2n+3)(n+m)^2 \, \||x|^2 U^\pm_{n-2,m+1}\|^2_2  
\end{align*}
for $1\le m \le n-1$. Using \eqref{L2norms-solid-harmonics-general} the results follow.

By Proposition \ref{prop:components}, direct computations show that
\begin{align*}
\|Y^\pm_{n,m}\|^2_2 = & \, 4(n-2m^2)^2 \|U^\pm_{n,m}\|^2_2 + 4(2n+1)^2(n+m)^2(n-1+m)^2 \||x|^2 U^\pm_{n-2,m}\|^2_2 \\
& + \, 2(n+m)^2(n-m+1)^2(2m-1)^2 \|U^\pm_{n,m-1}\|^2_2 \\
& + \, 2(2n+1)^2(n+m)^2(n+m-1)^2(n+m-2)^2 \||x|^2 U^\pm_{n-2,m-1}\|^2_2 \\
&+ \, 2(2m+1)^2\|U^\pm_{n,m+1}\|^2_2 + 2(2n+1)^2(n+m)^2 \||x|^2 U^\pm_{n-2,m+1}\|^2_2
\end{align*}
and
\begin{align*}
\|Z^\pm_{n,m}\|^2_2 = &  \, (4m^2-1)^2 \|U^\pm_{n,m}\|^2_2 + (2n+3)^2(n+m)^2(n+m-1)^2 \||x|^2 U^\pm_{n-2,m}\|^2_2 \\
&\, + 2(n+m)^2(n-m+1)^2(2m-1)^2 \|U^\pm_{n,m-1}\|^2_2 \\
&\, + \frac{1}{2} (2n+3)^2(n+m)^2(n+m-1)^2(n+m-2)^2 \||x|^2 U^\pm_{n-2,m-1}\|^2_2 \\
&\, + 2(2m+1)^2\|U^\pm_{n,m+1}\|^2_2 + \frac{1}{2}(2n+3)^2(n+m)^2 \||x|^2 U^\pm_{n-2,m+1}\|^2_2  .
\end{align*}
Using \eqref{L2norms-solid-harmonics-general} the equalities follow.
\end{proof}


Introduce the following rational constants:
\begin{align}
  \alpha_{n,m} &= \frac{n-m+1}{(n+1)(2n+1)}, \quad 1\le m \le n ,\\ 
  \beta_{n,m} &= \frac{ -(2n+3)((n-1)(m^2(2n+3) - (n(n+1)^2)-n^4)}{  2(m^2(n+4(n-1)(n+1)^2) - n(2n+1)((n-1)(n+1)^2+n^3))},\quad
                                                0\le m \le n-1
\end{align}
and for $n\ge2$ define
\begin{align} \label{New_basis}
    Z^+_{n,0} & = \Z^+_{n,0} , \nonumber \\
    Z^\pm_{n,m}   & = \Z^\pm_{n,m} +  \alpha_{n,m} X^\pm_{n,m}
      + \beta_{n,m} Y^\pm_{n,m}, \nonumber\\
     Z^\pm_{n,n} & = \Z^\pm_{n,n} \, +  \alpha_{n,m} X^\pm_{n,n};
\end{align}
and then
\begin{align}
\B_n &= \{X^\pm_{n,m}\}_m \cup
    \{Y^\pm_{n,m}\}_m \cup \{Z^\pm_{n,m}\}_m. 
\end{align} 
As always, in the definition of $X^\pm_{n,m}$ we have $0\le m\le n+1$
for the ``+'' sign and $1\le m\le n+1$ for the``+'' sign. In the
definition of $Y^\pm_{n,m}$ we have $0\le m\le n-1$ for the ``+'' sign
and $1\le m\le n-1$ for the``+'' sign. In the definition of
$Z^\pm_{n,m}$ (excluding the cases $ Z^+_{n,0}$,
$Z^\pm_{n,n}$) we have $1\le m\le n-1$ for both the ``+'' and the
``-'' signs.

Theorem \ref{theo:products} implies that $\B_n = \{B_{n,j} \colon \ 1 \leq j \leq 6n+3\}$ is a linearly
independent set, so by Theorem \ref{prop:diminfr}, $\B_n$ is a
basis of $\inf_n$ ($\B_0$ and $\B_1$ were explained previously).  An arbitrary inframonogenic function in any ball,
being real-analytic, admits a power series representation in
$x_0,x_1,x_2$ in that ball. Since this can be differentiated
term-by-term, the homogenous part of this series of degree $n$ is
inframonogenic for every $n$. Thus, we have the following:

\begin{theo}
  An inframonogenic function $f$ in any ball in $\R^3$
  admits a unique infinite series expansion
  \begin{align}    \label{eq:series}
    f = \sum_{n=0}^\infty\sum_{j=1}^{6n+3}  a_{n,j} B_{n,j}
  \end{align}
  converging uniformly on compact subsets, where $a_{n,j}\in\R$.
\end{theo}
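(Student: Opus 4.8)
The plan is to reduce the statement to two ingredients already in hand: the real-analyticity (hence homogeneous expandability) of inframonogenic functions, and the fact established just above that $\B_n$ is a basis of $\inf_n$. The whole argument is essentially a bookkeeping assembly once the convergence is secured.

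First I would produce a homogeneous expansion $f=\sum_{n\ge0}P_n$ with each $P_n$ an $\R^3$-valued element of $\pol_n$. Rather than invoking real-analyticity as a black box, I would lean on the biharmonicity of the components supplied by Proposition \ref{prop:partialbpartial}: writing each component via its Almansi decomposition $f_i=g_i+|x|^2h_i$ with $g_i,h_i$ harmonic, the classical solid-harmonic expansions of $g_i$ and $h_i$ converge, together with all their derivatives, uniformly on compact subsets of the whole ball. Collecting the three components yields $f=\sum_n P_n$ with $P_n\in\pol_n$ and the same convergence properties.

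Next I would show each $P_n$ is inframonogenic. Since $\partialbar(\cdot)\partialbar$ is a constant-coefficient second-order operator, the uniform-on-compacts convergence of the expansion and of its derivatives permits applying it term by term, giving $0=\partialbar f\partialbar=\sum_n\partialbar P_n\partialbar$. Because $\partialbar P_n\partialbar\in\pol_{n-2}$ and homogeneous polynomials of distinct degrees are linearly independent, each summand must vanish, so $P_n\in\inf_n$. By the basis property of $\B_n$ (Theorem \ref{prop:diminfr} together with Theorem \ref{theo:products}), each $P_n$ has a unique representation $P_n=\sum_{j=1}^{6n+3}a_{n,j}B_{n,j}$ with $a_{n,j}\in\R$; substituting produces \eqref{eq:series}, which inherits uniform convergence on compacts. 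Uniqueness comes in two layers: the homogeneous parts $P_n$ of $f$ are uniquely determined by $f$, and within degree $n$ the coefficients $a_{n,j}$ are unique by linear independence of $\B_n$.

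The main obstacle is precisely the global convergence of the homogeneous expansion on the entire ball (as opposed to a concentric sub-ball) together with the justification of term-by-term differentiation. Generic real-analyticity does not by itself deliver this; the point rests on the elliptic nature of the problem, i.e.\ on the biharmonicity of the components and the reduction, via Almansi, to the classical fact that the solid-harmonic series of a harmonic function converges uniformly on compact subsets of its ball of harmonicity. Once that convergence and its differentiated counterpart are in place, the remaining steps are routine.
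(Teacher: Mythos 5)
Your proof is correct, and while its skeleton (homogeneous expansion of $f$, term-by-term application of $\partialbar(\cdot)\partialbar$, expansion of each homogeneous part in the basis $\B_n$) coincides with the paper's, you justify the crucial convergence step by a genuinely different and more self-contained route. The paper's entire argument is the remark preceding the theorem: $f$, having biharmonic components, is real-analytic \cite{ACL1983}, hence ``admits a power series representation in that ball'' which ``can be differentiated term-by-term.'' As you correctly observe, bare real-analyticity does not deliver this: the Taylor series of a real-analytic function at the center need not converge in the whole ball, so the paper is implicitly leaning on the polyharmonic theory of the cited reference. You make that dependence explicit and elementary: Proposition \ref{prop:partialbpartial} gives biharmonicity of the components, the Almansi decomposition $f_i=g_i+|x|^2h_i$ (valid in a ball centered at the origin, which is how the statement must be read for the homogeneous expansion to make sense) reduces everything to harmonic functions, and the classical fact that the solid-harmonic expansion of a harmonic function converges, together with all derivatives, locally uniformly in the entire ball of harmonicity supplies the global convergence and the term-by-term differentiation. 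This buys a complete argument where the paper offers only a citation-level gloss. One small point you should make explicit: from $0=\sum_n\partialbar P_n\partialbar$ you conclude each summand vanishes ``by linear independence of distinct degrees,'' but for an infinite series this needs one extra line --- fix $x$ with $|x|$ small, use homogeneity to write the sum as $\sum_n t^{\,n-2}\,(\partialbar P_n\partialbar)(x)=0$ for all $t\in(0,1)$, and conclude each coefficient vanishes as for a one-variable power series. With that sentence added, your argument is airtight and, if anything, more rigorous than the paper's own.
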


An elementary argument based on the Monotone Convergence Theorem shows that
if $f$ is square-integrable, then \eqref{eq:series} converges in the $L^2$
sense. Thus we have

\begin{theo}
  The collection of homogeneous polynomials $\bigcup_{n=0}^\infty\B_n$
  is an orthogonal basis for the space of all square-integrable
  inframonogenic functions in any ball centered at the origin in
  $\R^3$. 
\end{theo}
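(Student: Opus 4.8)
The plan is to establish two properties separately: that the collection $\bigcup_{n\ge0}\B_n$ is orthogonal in $L^2(B_1(0))$ with respect to the inner product \eqref{scalar-inner-product}, and that it is complete, i.e.\ its closed linear span is the whole space of square-integrable inframonogenic functions. Completeness is essentially already in hand: by the expansion \eqref{eq:series} every inframonogenic function in the ball is a series in the $B_{n,j}$ converging uniformly on compact subsets, and the remark preceding the statement upgrades this to $L^2$ convergence when $f$ is square-integrable. Hence every such $f$ lies in the $L^2$-closure of $\operatorname{span}\bigcup_n\B_n$; since that span consists of inframonogenic polynomials and the space of square-integrable inframonogenic functions is a closed subspace of $L^2(B_1(0))$ (as $L^2$-limits of solutions of the elliptic two-sided operator $\partialbar(\cdot)\partialbar$ are again inframonogenic by interior regularity), this closure is exactly the space. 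The remaining content is therefore orthogonality, which I would split into orthogonality across distinct degrees and orthogonality within a single degree.

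For the cross-degree part I would read off from Proposition \ref{prop:components} that every component of every $B_{n,j}$ is an $\R$-linear combination of the solid harmonics $U^\pm_{n,m}$ and of $|x|^2U^\pm_{n-2,m}=\rho^2U^\pm_{n-2,m}$, so each $B_{n,j}$ is homogeneous of degree $n$ with components of the form $\rho^{2k}U^\pm_{n-2k,m}$, $k\in\{0,1\}$. Lemma \ref{lemma_orthogonality_spherical_harmonics} then shows that $\langle\rho^{2k}U^{\pm}_{p,m},\rho^{2k'}U^{\pm}_{p',m'}\rangle$ vanishes unless $p=p'$ and $m=m'$ (with equal sign), while the mixed products $\langle U^+_{\cdot,m},U^-_{\cdot,m'}\rangle$ vanish from the $\varphi$-integral. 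Expanding $\langle B_{n,j},B_{n',j'}\rangle$ into a finite sum of such scalar products and noting that degree-$n$ components pair only with degree-$n$ components yields $\langle B_{n,j},B_{n',j'}\rangle=0$ whenever $n\neq n'$.

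The within-degree orthogonality is the heart of the argument and the step I expect to be the main obstacle. By Theorem \ref{theo:products}, among the degree-$n$ families $\{X^\pm_{n,m}\}$, $\{Y^\pm_{n,m}\}$ and $\{\Z^\pm_{n,m}\}$, every pair of distinct elements is orthogonal except for the two families of overlaps $\langle X^\pm_{n,m},\Z^\pm_{n,m}\rangle$ and $\langle Y^\pm_{n,m},\Z^\pm_{n,m}\rangle$. In particular the $X$'s and $Y$'s already form an orthogonal system, and each $\Z^\pm_{n,m}$ fails to be orthogonal only to the single pair $X^\pm_{n,m}$, $Y^\pm_{n,m}$ carrying the same indices and sign. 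The modification \eqref{New_basis}, $Z^\pm_{n,m}=\Z^\pm_{n,m}+\alpha_{n,m}X^\pm_{n,m}+\beta_{n,m}Y^\pm_{n,m}$, is precisely a one-step Gram--Schmidt correction subtracting the orthogonal projection of $\Z^\pm_{n,m}$ onto $\operatorname{span}\{X^\pm_{n,m},Y^\pm_{n,m}\}$. Because $X^\pm_{n,m}\perp Y^\pm_{n,m}$, the two orthogonality conditions decouple and force $\alpha_{n,m}=-\langle\Z^\pm_{n,m},X^\pm_{n,m}\rangle/\|X^\pm_{n,m}\|_2^2$ and $\beta_{n,m}=-\langle\Z^\pm_{n,m},Y^\pm_{n,m}\rangle/\|Y^\pm_{n,m}\|_2^2$; the work is to substitute the closed-form norms and cross products of Theorem \ref{theo:products} and verify that these ratios coincide with the prescribed rational constants $\alpha_{n,m}$, $\beta_{n,m}$. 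I would treat the boundary cases separately: $Z^+_{n,0}=\Z^+_{n,0}$ needs no correction since $\langle X^+_{n,0},\Z^+_{n,0}\rangle=0$ and $\langle Y^\pm_{n,m},\Z^+_{n,0}\rangle=0$ already, while $Z^\pm_{n,n}$ carries only the $X$-correction because $Y^\pm_{n,n}$ does not occur. Since adding multiples of $X^\pm_{n,m}$ and $Y^\pm_{n,m}$ leaves untouched all inner products of $\Z^\pm_{n,m}$ against the remaining degree-$n$ elements (which were already zero), the corrected family $\B_n$ is orthogonal.

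Combining the two parts shows $\bigcup_n\B_n$ is orthogonal, hence (its elements being nonzero) linearly independent, which re-proves that each $\B_n$ is a basis of $\inf_n$; together with the completeness noted above, the system is a complete orthogonal set in the Hilbert space of square-integrable inframonogenic functions and is therefore an orthogonal basis. The routine pieces are the cross-degree orthogonality and completeness, both of which follow quickly from Lemma \ref{lemma_orthogonality_spherical_harmonics} and the expansion \eqref{eq:series}; the delicate part is confirming that the prescribed $\alpha_{n,m}$, $\beta_{n,m}$ realize the Gram--Schmidt projection, where the explicit inner-product formulas of Theorem \ref{theo:products} and careful handling of the $m=0$ and $m=n$ edge cases are essential.
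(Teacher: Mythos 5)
Your completeness argument and your within-degree orthogonality argument coincide with the paper's own route: the paper obtains the theorem precisely by upgrading the expansion \eqref{eq:series} to $L^2$ convergence (via monotone convergence), and it relies on Theorem \ref{theo:products} together with the correction \eqref{New_basis} for orthogonality inside each $\inf_n$. Your reading of \eqref{New_basis} as a one-step Gram--Schmidt projection is correct and is a useful point the paper leaves implicit; indeed, from Theorem \ref{theo:products},
\begin{equation*}
-\frac{\langle X^\pm_{n,m},\Z^\pm_{n,m}\rangle}{\|X^\pm_{n,m}\|_2^2}
=\frac{2\pi}{(2n+1)(2n+3)}\,\frac{(n+m+1)!}{(n-m)!}\cdot\frac{2n+3}{2\pi(n+1)}\,\frac{(n+1-m)!}{(n+1+m)!}
=\frac{n-m+1}{(n+1)(2n+1)}=\alpha_{n,m},
\end{equation*}
and the decoupling of the $\alpha$- and $\beta$-corrections via $X^\pm_{n,m}\perp Y^\pm_{n,m}$ is as you say.

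The genuine gap is your cross-degree step, and it is a step that would fail as written. The Kronecker deltas in Lemma \ref{lemma_orthogonality_spherical_harmonics} refer to the degrees of the \emph{harmonics} $U^\pm_{n,m}$, not to the degrees of the polynomials $\rho^{2k}U^\pm_{n,m}$: the lemma gives $\langle U^\pm_{p,m},\,|x|^2U^\pm_{p,m}\rangle\neq 0$. Since by Proposition \ref{prop:components} the elements of $\B_n$ carry components proportional to $|x|^2U^\pm_{n-2,m}$ while elements of $\B_{n-2}$ carry components proportional to $U^\pm_{n-2,m}$, your assertion that ``degree-$n$ components pair only with degree-$n$ components'' is exactly where the argument breaks: pairings between $\B_n$ and $\B_{n\pm2}$ survive unless the coefficients conspire to cancel, and this cancellation is a nontrivial claim needing its own computation. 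In fact it does not always hold for the basis as the paper defines it: $e_0\in\B_0$ by Definition \ref{defi:basics}, and from Table \ref{tab:deg2} (equivalently, from the scalar part $[Y^+_{2,0}]_0=\tfrac13\bigl(4U^+_{2,0}+20|x|^2U^+_{0,0}\bigr)$ of Proposition \ref{prop:components}),
\begin{equation*}
\langle Y^+_{2,0},\,e_0\rangle=\int\!\!\int\!\!\int_{B_1(0)}\bigl(8x_0^2+6x_1^2+6x_2^2\bigr)\,dV=\frac{20\cdot 4\pi}{15}=\frac{16\pi}{3}\neq 0.
\end{equation*}
So orthogonality between degrees of equal parity differing by $2$ cannot be dispatched by citing Lemma \ref{lemma_orthogonality_spherical_harmonics}, and at least for the pair $(e_0,\,Y^+_{2,0})$ it fails outright.

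To be fair to you, this is also the one point the paper itself passes over in silence: Theorem \ref{theo:products} records only products of elements with the same degree $n$, and the paper's proof of the final theorem consists solely of the $L^2$-convergence upgrade of \eqref{eq:series} plus the linear-independence count from Theorem \ref{prop:diminfr}. Your proposal has thus correctly isolated an obligation --- vanishing of the degree-$n$ against degree-$(n\pm2)$ products --- that neither your argument nor the paper discharges, but your purported justification of it is a non sequitur, and the displayed computation shows the claim needs repair (e.g.\ modifying the low-degree sets $\B_0$, $\B_1$ or the $Y^\pm_{n,m}$ by a further Gram--Schmidt step against lower degrees) rather than merely a better citation. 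The remaining ingredients of your write-up (closedness of the square-integrable inframonogenic subspace, nonvanishing of the corrected $Z^\pm_{n,m}$, and the edge cases $m=0$, $m=n$) are handled reasonably and consistently with the paper.
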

 
\section{Discussion}

The reader may be interested in the somewhat tortuous route
through which the formulas for the inframonogenic polynomials were
arrived at. It began with the so-called real Zernike ball polynomials $\ZerR{n,l}{}{} \in \pol_n$, which were 
defined in $\R^3$ in \cite{Canterakis1999}. These triply-indexed polynomials are defined as products of Jacobi polynomials $P^{(0,l+1/2)}_{(n-l)/2}(2\rho^2-1)$
and solid spherical harmonics $U^{\pm}_{l,m}$. Since these products are polynomials in $\rho^2$ in the particular case $n-l=2$, it was evident that $\ZerR{n,n-2}{}{} \in \bih_{n}$. 
Therefore automatically $\partial(\ZerR{n+2,n}{}{})\partial \in \inf_{n}$.
This observation initiated the quest.  Combining with the $2n+3$
monogenic polynomials $X^\pm_{n,m}$ produces insufficient elements
to form a basis, so the quaternion-valued Zernike polynomials
$\ZerQ{n,l}{}{}$ were explored \cite{MoraisCacao2014}. These are defined by replacing
$\rho^n$ in the definition of $X^\pm_{n,m}$ with the alternative
radial function mentioned above. Again since these Jacobi polynomials are
polynomials in $\rho^2$ when $n-l=2$, it was seen that the left derivative $\partial\ZerQ{n,n-2}{}{}$ is
annihilated by $\partialbar(\cdot)\partialbar$.  Unfortunately, the resulting functions
are not $\R^3$-valued, but perhaps surprisingly, this could be
remedied simply by dropping the $e_3$ term.  At this stage, we had in
hand a list of $6n+3$ inframonogenic polynomials, but not homogeneous:
the latter type were a mixture of terms of degree $n$ and $n-2$.  As remarked earlier every homogeneous part of an inframonogenic
polynomial is inframonogenic, so we obtained elements of $\inf_n$ by dropping the terms of lower degree. But this operation loses the linear independence: these polynomials generate a
subspace of dimension $6n+2$, as the monogenic polynomial $X^+_{n,0}$ is a
linear combination of $Y^+_{n,0}$ and
the polynomial obtained from ${\hspace{.1ex}{}^{\H}\hspace{-.5ex}\mbox{Zer}_{n,n-2}^{0}}$. For this reason, we replaced it with the
function $\Z^+_{n,0}$ which is the first in the list of contragenic
functions defined in \cite{AlvarezPorter2014}. (Curiously, the remaining basic
contragenic functions are not inframonogenic.) With these adjustments
and multiplication by constants for convenience,
the definitions of $X^\pm_{n,m}$, $Y^\pm_{n,m}$, $\Z^\pm_{n,m}$ were
as given in this paper.

\appendix 
\section{Reference formulas\label{sec:formulas}}

There are several versions of ``Leibniz formulas'' for quaternionic
(or Clifford) valued functions
\cite{BDS1982,CacaoFalcaoMalonek2011,GurlebeckSprossig1997}, that is,
expansions for differential operators applied to the product of two
functions.  The noncommutativity makes the expressions much more
complicated than in the case of complex numbers.  Our purpose in this
Appendix is to prove a formula which we believe is simpler and more
useful than those previously existing in the literature.

First we list for reference some properties of the operators
$\vc\partial$ and  $\partialbar$ which are quite easy to verify,
and appear frequently in calculations relating to inframonogenic functions.

\begin{prop}\label{prop:bilateral}
\begin{align*}
  \vc\partial \vc f + \vc f \vc\partial &=
  -2(\partial_1f_1+\partial_2f_2 ), \\
  \vc\partial \vc f_0 \vc\partial  &= \vc\partial^2f_0 =
    -(\partial_1^2+\partial_2^2)f_0,\\
  \vc\partial \vc f \vc\partial  &=
  ((-\partial_1^2 +\partial_2^2)f_1 -2\partial_1\partial_2f_2)e_1 \\
                                            & \ \  + ( -2\partial_1\partial_2f_1 + (\partial_1^2 -\partial_2^2)f_2 )e_2, \\
  \partialbar f\partialbar &=
 ( (\partial_0^2+\vc\partial^2)+2\partial_0\vc\partial)f_0
   + ( \partial_0^2\vc f + \partial_0(\vc\partial\vc f+\vc f\vc\partial) + \vc\partial \vc f \vc\partial).
\end{align*}
\end{prop}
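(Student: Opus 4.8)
The statement collects four operator identities, all of which I would establish by direct computation from the quaternion multiplication table; the work is purely careful bookkeeping, with noncommutativity as the only hazard. The first thing I would pin down is the convention for the one-sided actions of $\vc\partial$, since the whole calculation hinges on it: applied on the left, $\vc\partial g = e_1\partial_1 g + e_2\partial_2 g$, whereas applied on the right, $g\vc\partial = (\partial_1 g)e_1 + (\partial_2 g)e_2$, consistent with the expansion of $\partialbar f$ used earlier in the paper. The only products of units needed are $e_1^2=e_2^2=-1$, $e_1e_2=-e_2e_1=e_3$, together with $e_3e_1=e_2$ and $e_3e_2=-e_1$.

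For the first identity I would expand $\vc\partial\vc f$ and $\vc f\vc\partial$ separately. Each yields the scalar part $-(\partial_1 f_1+\partial_2 f_2)$ together with an $e_3$ term, and the point is that these $e_3$ terms are $\pm(\partial_1 f_2-\partial_2 f_1)e_3$, i.e.\ negatives of one another, so they cancel on addition and leave exactly $-2(\partial_1 f_1+\partial_2 f_2)$. The second identity is then immediate: for scalar $f_0$ the two one-sided actions coincide (because $\partial_i f_0$ is real and commutes with the units), so $\vc\partial f_0\vc\partial=\vc\partial^2 f_0=\sum_{i,j}e_ie_j\,\partial_i\partial_j f_0$, and the antisymmetric cross terms drop out via $e_1e_2+e_2e_1=0$, leaving only $e_1^2\partial_1^2+e_2^2\partial_2^2=-(\partial_1^2+\partial_2^2)$.

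The third identity is where I expect the main effort to lie. I would reuse the first step: $\vc\partial\vc f$ is the scalar-plus-$e_3$ quantity $g=g_0+g_3e_3$ with $g_0=-(\partial_1 f_1+\partial_2 f_2)$ and $g_3=\partial_1 f_2-\partial_2 f_1$, and then I apply $\vc\partial$ on the right. The delicate point is that right-multiplication of $g_3e_3$ by $e_1$ and $e_2$ feeds the $e_3$ component back into the $e_1,e_2$ plane through $e_3e_1=e_2$ and $e_3e_2=-e_1$; tracking these sign flips correctly is precisely what produces the mixed derivatives $-2\partial_1\partial_2 f_2$ and $-2\partial_1\partial_2 f_1$ alongside $(-\partial_1^2+\partial_2^2)f_1$ and $(\partial_1^2-\partial_2^2)f_2$. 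A good consistency check is that both the scalar and the $e_3$ parts of the final answer vanish, in agreement with Proposition \ref{prop:R3conserved}.

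The fourth identity then follows formally from the first three with no further computation. Writing $\partialbar=\partial_0+\vc\partial$ and using that $\partial_0$ is scalar and commutes through, I would expand the two-sided product as $\partialbar f\partialbar=\partial_0^2 f+\partial_0(\vc\partial f+f\vc\partial)+\vc\partial f\vc\partial$. Splitting $f=f_0+\vc f$ and invoking $\vc\partial f_0+f_0\vc\partial=2\vc\partial f_0$ (once more because the one-sided actions agree on scalars) together with identity~2 for $\vc\partial f_0\vc\partial$, the terms regroup exactly into the $f_0$-block $(\partial_0^2+\vc\partial^2+2\partial_0\vc\partial)f_0$ and the $\vc f$-block $\partial_0^2\vc f+\partial_0(\vc\partial\vc f+\vc f\vc\partial)+\vc\partial\vc f\vc\partial$, which is the asserted form.
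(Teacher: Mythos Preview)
Your proposal is correct and follows exactly the route the paper intends: the authors state these identities without proof, remarking only that they ``are quite easy to verify,'' and your direct expansion from the quaternion multiplication table is precisely that verification. Your bookkeeping (in particular the cancellation of the $e_3$ terms in the first identity and the $e_3e_1=e_2$, $e_3e_2=-e_1$ mechanism in the third) is accurate, and the fourth identity indeed follows by the bilinear expansion you describe.
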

   
\begin{prop} \label{prop:bilateralderivsimple}
  \begin{align*}
    \partialbar f_0\,\partialbar &= ((\partial_0^2-\partial_1^2-\partial_2^2) + 2\partial_0\partial_1\,e_1 +  2\partial_0\partial_2\,e_2)f_0,\\
 \partialbar f_1e_1\,\partialbar &= (-2\partial_0\partial_1+(\partial_0^2-\partial_1^2+\partial_2^2)\,e_1 - 2\partial_1\partial_2\,e_2)f_1,\\   
    \partialbar f_2e_2\,\partialbar &= (-2\partial_0\partial_2 - 2\partial_1\partial_2\,e_1+(\partial_0^2+\partial_1^2-\partial_2^2)\,e_2)f_2,\\
    \partialbar \vc f\,\partialbar &= -2(\partial_0\partial_1f_1 +\partial_0\partial_2f_2) + ( (\partial_0^2-\partial_1^2+\partial_2^2)f_1 - 2\partial_1\partial_2f_2)\,e_1\\
     & \qquad\qquad +  ( (\partial_0^2+\partial_1^2-\partial_2^2)f_1 - 2\partial_1\partial_2f_1)\,e_2\\                     
    \partialbar f+f\partialbar &= 2(\partial_0f_0-\partial_1f_1-\partial_2f_2)   + 2(\partial_0f_1+\partial_1f_0)e_1 + 2(\partial_0f_2+\partial_2f_0)e_2,\\
    \partial f+f\partial &= 2(\partial_0f_0+\partial_1f_1+\partial_2f_2)
 + 2(\partial_0f_1-\partial_1f_0)e_1 + 2(\partial_0f_2-\partial_2f_0)e_2  .
  \end{align*}
\end{prop}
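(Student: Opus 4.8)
The plan is to reduce every identity to the already-established Proposition \ref{prop:bilateral}, supplemented by the elementary multiplication rules $e_1^2=e_2^2=-1$, $e_1e_2=e_3$, and $e_2e_1=-e_3$. The six formulas split naturally into two groups: the first four are second-order and will be read off from the decomposition of $\partialbar f\partialbar$ in Proposition \ref{prop:bilateral}, while the last two are first-order and will be obtained by direct expansion.

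For the second-order identities I would begin with $\partialbar\vc f\,\partialbar$, since the three single-component formulas are its specializations. By the last line of Proposition \ref{prop:bilateral}, the vector contribution to $\partialbar f\partialbar$ is $\partial_0^2\vc f+\partial_0(\vc\partial\vc f+\vc f\vc\partial)+\vc\partial\vc f\vc\partial$; substituting the already-computed expressions for $\vc\partial\vc f+\vc f\vc\partial$ and $\vc\partial\vc f\vc\partial$ from the same proposition and collecting the scalar, $e_1$, and $e_2$ parts yields the stated formula for $\partialbar\vc f\,\partialbar$. Setting $f_2=0$ (so that $\vc f=f_1e_1$) then gives $\partialbar f_1e_1\,\partialbar$, and setting $f_1=0$ gives $\partialbar f_2e_2\,\partialbar$. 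The scalar case $\partialbar f_0\,\partialbar$ is handled separately by taking the scalar contribution $(\partial_0^2+\vc\partial^2+2\partial_0\vc\partial)f_0$ from Proposition \ref{prop:bilateral} and using $\vc\partial^2=-(\partial_1^2+\partial_2^2)$ together with $2\partial_0\vc\partial=2\partial_0\partial_1e_1+2\partial_0\partial_2e_2$.

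For the two first-order identities I would compute $\partialbar f$ and $f\partialbar$ separately and add. Expanding $\partialbar f=(\partial_0+\partial_1e_1+\partial_2e_2)(f_0+f_1e_1+f_2e_2)$ via the multiplication table produces a scalar, an $e_1$, an $e_2$, and an $e_3$ term, the last being $(\partial_1f_2-\partial_2f_1)e_3$. The right application $f\partialbar$ must be read as $\partial_0f+(\partial_1f)e_1+(\partial_2f)e_2$, i.e.\ the units multiply $f$ on the right; carrying this out yields the same scalar, $e_1$, and $e_2$ parts but the opposite sign on the $e_3$ term. Adding therefore cancels the $e_3$ contributions and doubles the rest, giving the formula for $\partialbar f+f\partialbar$; the computation for $\partial f+f\partial$ is identical after flipping the sign of $\vc\partial$. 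The only point requiring care throughout is the bookkeeping of noncommutativity---in particular distinguishing left from right application of the operator and tracking $e_2e_1=-e_3$ against $e_1e_2=e_3$---which is exactly what makes the spurious $e_3$ terms appear and then cancel, confirming in passing that each expression is $\R^3$-valued, consistent with Proposition \ref{prop:R3conserved}.
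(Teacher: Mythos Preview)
Your approach is correct and is precisely the kind of direct verification the paper has in mind: the paper does not supply a proof of this proposition at all, merely introducing it (together with Proposition~\ref{prop:bilateral}) as ``properties of the operators $\vc\partial$ and $\partialbar$ which are quite easy to verify''. Your reduction of the second-order identities to the last line of Proposition~\ref{prop:bilateral} via the already-computed $\vc\partial\vc f+\vc f\vc\partial$ and $\vc\partial\vc f\vc\partial$, and your direct expansion for the first-order identities with the $e_3$ cancellation, are exactly the intended short calculations.
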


As a preliminary to Leibnitz rules for the two-sided operators, we
list some rules for $\vc\partial$ applied to scalar or vector valued
functions, all of which follow from short calculations:
 
\begin{prop}\label{prop:leibvec} 
  \begin{align*}
  (1L)\qquad&  \vc\partial(f_0\,g_0) && \hspace*{-10ex} = (\vc\partial f_0)g_0+f_0(\vc\partial g_0); \\
   (2L)\qquad&   \vc\partial(f_0\,\vc g) && \hspace*{-10ex} = (\vc\partial f_0)\vc g+f_0(\vc\partial\vc g); \\
   (3L)\qquad&   \vc\partial(\vc f\, g_0) && \hspace*{-10ex} = (\vc\partial \vc f) g_0 -
  ((f_1\partial_1g_0+f_2\partial_2g_0) +
   (f_2\partial_1g_0-f_1\partial_2g_0)e_3)      ;\\
   (4L)\qquad&   \vc\partial(\vc f\,\vc g)&& \hspace*{-10ex} = (\vc\partial\vc f)\vc g +
  ( f_1\vc\partial(e_1\vc g) + f_2\vc\partial(e_2 \vc g));
  \end{align*}
  \begin{align*}
    (1R)\qquad&     (f_0\,g_0)\vc\partial && \hspace*{-10ex} = (\vc\partial f_0)g_0+f_0(\vc\partial g_0); \\
    (2R)\qquad& (f_0\,\vc g) \vc\partial && \hspace*{-10ex} = \vc g(\vc\partial f_0) +f_0(\vc g\vc\partial); \\
    (3R)\qquad&  (\vc f\, g_0)\vc\partial && \hspace*{-10ex} =
  (  \vc f\vc\partial) g_0 -                                 
  ((f_1\partial_1g_0+f_2\partial_2g_0) +
  e_3(f_2\partial_1g_0+f_1\partial_2g_0));  \\
   (4R)\qquad&   (\vc f\,\vc g)\vc\partial && \hspace*{-10ex} = \vc f  (\vc g\vc\partial) +
 ( g_1(\vc f\vc e_1)\partial + g_2(\vc fe_2)\vc\partial).      
  \end{align*}
\end{prop}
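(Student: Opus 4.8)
The plan is to prove all eight identities by direct computation, reducing each to the real product rule together with the multiplication table $e_1^2=e_2^2=-1$, $e_1e_2=e_3=-e_2e_1$. First I would fix the convention implicit in the notation: $\vc\partial$ differentiates and multiplies by the units on the \emph{left}, $\vc\partial u=e_1(\partial_1 u)+e_2(\partial_2 u)$, whereas in $u\vc\partial$ the units stand on the \emph{right}, $u\vc\partial=(\partial_1 u)e_1+(\partial_2 u)e_2$. This is the convention under which Proposition~\ref{prop:bilateral} holds, and I would check it against the scalar-scalar case before proceeding.

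For the four left identities the mechanism is uniform. Applying the ordinary Leibnitz rule componentwise and then collecting the units,
\[ \vc\partial(uv)=e_1\big((\partial_1 u)v+u\,\partial_1 v\big)+e_2\big((\partial_2 u)v+u\,\partial_2 v\big)=(\vc\partial u)\,v+\big(e_1\,u\,\partial_1 v+e_2\,u\,\partial_2 v\big). \]
The first summand is the ``expected'' one-sided term, and the entire content of the identities sits in the correction $e_1 u\,\partial_1 v+e_2 u\,\partial_2 v$. When the left factor $u=f_0$ is scalar it commutes with the units and the correction closes up as $f_0(\vc\partial v)$, giving the clean rules (1L) and (2L). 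When $u=\vc f$ is a vector, the units must be commuted past $\vc f$ through the table; the products $e_1\vc f$, $e_2\vc f$ then split into a scalar and an $e_3$ part, yielding (3L) in expanded form, while for (4L) the same correction repackages neatly as $f_1\vc\partial(e_1\vc g)+f_2\vc\partial(e_2\vc g)$.

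The four right identities follow by the mirror-image calculation: split $(uv)\vc\partial$ by the Leibnitz rule and carry the units on the right, so that $(uv)\vc\partial=u\,(v\vc\partial)+\big((\partial_1 u)v\,e_1+(\partial_2 u)v\,e_2\big)$. Exactly as on the left, one of the two pieces reassembles into a clean one-sided operator term while the other carries the noncommutative content; depending on which factor is scalar, that content is either written out explicitly into scalar and $e_3$ parts (as in (3R), through $\vc f e_1,\vc f e_2$) or left in the compact forms $\vc g(\vc\partial f_0)$ for (2R) and $g_1(\vc f e_1)\vc\partial+g_2(\vc f e_2)\vc\partial$ for (4R), while (1R) is the plain Leibnitz identity.

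I expect the sole difficulty to be bookkeeping rather than anything conceptual. Each commutation of a unit past a vector factor produces both a sign and an $e_3$ contribution through $e_1e_2=e_3$ and $e_2e_1=-e_3$, so the whole task is to route the four scalar products of the form $f_i\,\partial_j(\cdot)$ into their correct scalar and $e_3$ slots with the right signs. The one presentational choice worth flagging is that in (4L) and (4R) the correction is cleanest when left in the unexpanded operator form $\vc\partial(e_i\vc g)$, $(\vc f e_i)\vc\partial$ exhibited in the statement, rather than written out in components.
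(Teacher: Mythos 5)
Your overall route is exactly the one the paper intends: Proposition \ref{prop:leibvec} is offered with no written proof beyond the remark that the identities ``follow from short calculations,'' and your componentwise Leibnitz split with a left/right correction term, plus commutation of units through the multiplication table, is that calculation. Your convention $\vc\partial u = e_1(\partial_1u)+e_2(\partial_2u)$, $u\vc\partial=(\partial_1u)e_1+(\partial_2u)e_2$ is the correct reading (it is the one that reproduces Proposition \ref{prop:bilateral} and Proposition \ref{prop:bilateralderivsimple}), and your treatment of (1L), (2L), (4L), (1R), (2R), (4R) is sound; in (4R) you also silently and correctly repaired the obvious misprints ($\vc e_1$, and a $\partial$ missing its arrow) in the displayed statement.

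The gap is precisely at the step you deferred as ``bookkeeping'': your claim that commuting the units past $\vc f$ ``yields (3L) in expanded form'' is not true of (3L) as printed, and a proof that never displays those four commutations cannot detect this. Carrying out your own recipe, $e_1\vc f=-f_1+f_2e_3$ and $e_2\vc f=-f_2-f_1e_3$ give
\[ \vc\partial(\vc f\,g_0)=(\vc\partial\vc f)g_0-(f_1\partial_1g_0+f_2\partial_2g_0)+(f_2\partial_1g_0-f_1\partial_2g_0)e_3, \]
whose $e_3$ term has the \emph{opposite} sign to the one in the statement; the single test case $\vc f=e_1$, $g_0=x_2$ settles it, since $\vc\partial(x_2e_1)=e_2e_1=-e_3$ while the printed right-hand side of (3L) evaluates to $+e_3$. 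Similarly $\vc f e_1=-f_1-f_2e_3$ and $\vc f e_2=-f_2+f_1e_3$ give
\[ (\vc f\,g_0)\vc\partial=(\vc f\vc\partial)g_0-(f_1\partial_1g_0+f_2\partial_2g_0)-(f_2\partial_1g_0-f_1\partial_2g_0)e_3, \]
so the inner $+$ in the printed $e_3(f_2\partial_1g_0+f_1\partial_2g_0)$ of (3R) is likewise wrong. (The paper is not even self-consistent here: the version of (3L) substituted in its proof of Proposition \ref{prop:leibnitz} carries yet another sign pattern, though Proposition \ref{prop:leibnitz} itself checks out on examples.) So your method is the right one and six of the eight identities go through as you describe, but a complete proof must exhibit these commutations explicitly; doing so proves corrected versions of (3L) and (3R) and refutes the formulas as stated, a discrepancy your write-up needed to surface rather than assert away.
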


One might be tempted to express formulas such as (2L) and (3L) in terms
of operators written like ``$(f_1\vc\partial e_1+f_2\partial e_2)$''
and
``$-(f_1\partial_1+f_2\partial_2) +
(f_2\partial_1+f_1\partial_2)e_3$''; however, this would violate the
established conventions $\partial_if_j=f_j\partial_i$ used in the calculation
of operators such as $f\mapsto \partial f$ and $f\mapsto f\partial$.

\begin{prop}
  [Leibnitz rules for $\partial$, $\partialbar$]\label{prop:leibnitz}
\begin{align*}
   \partial(fg) &= (\partial f)g + f (\partial g) +
       2e_3(f_1(\partial_2g)-f_2(\partial_1g)), \\
   (fg)\partial  &=  f(g\partial) +  (f\partial)g-
       2 (g_1(\partial_2f)-g_2(\partial_1 f))e_3, \\          
   \partialbar(fg) &= (\partialbar f)g + f(\partialbar g) -
       2e_3(f_1(\partial_2g)-f_2(\partial_1g)), \\   
   (fg)\partialbar &=  f(g\partialbar) + (f\partialbar)g+
       2 (g_1(\partial_2f)-g_2(\partial_1 f))e_3.  
\end{align*}
\end{prop}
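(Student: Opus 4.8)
The plan is to reduce all four identities to the ordinary scalar product rule $\partial_i(fg)=(\partial_i f)g+f(\partial_i g)$, which holds for quaternion-valued $f,g$ because each $\partial_i$ is a real derivation and quaternionic multiplication is $\R$-bilinear, and then to isolate the single source of error coming from the noncommutativity: the imaginary unit that $\partial$ or $\partialbar$ carries must be moved past a factor with which it does not commute. First I would split $\partial=\partial_0-\vc\partial$ and $\partialbar=\partial_0+\vc\partial$. Since $\partial_0$ is scalar, $\partial_0(fg)=(\partial_0f)g+f(\partial_0g)$ contributes no correction, so every identity collapses to the analogous statement for $\vc\partial=\partial_1e_1+\partial_2e_2$ acting on one side, with the sign of the eventual correction tied to the $\mp$ in front of $\vc\partial$.

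For the left action, applying the scalar rule termwise and recalling that $\vc\partial$ prepends $e_i$ gives $\vc\partial(fg)=\sum_{i=1,2}e_i(\partial_if)\,g+\sum_{i=1,2}e_if(\partial_ig)$. The first sum is exactly $(\vc\partial f)g$; the term we want is $f(\vc\partial g)=\sum_i fe_i(\partial_ig)$, so the discrepancy is $\sum_{i=1,2}(e_if-fe_i)(\partial_ig)$. The crux is the elementary commutator computation for a reduced-quaternion value $f=f_0+f_1e_1+f_2e_2$: using $e_1^2=e_2^2=-1$ and $e_1e_2=e_3=-e_2e_1$, one finds $e_1f-fe_1=2f_2e_3$ and $e_2f-fe_2=-2f_1e_3$. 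Substituting, the discrepancy equals $-2e_3(f_1(\partial_2g)-f_2(\partial_1g))$, whence $\vc\partial(fg)=(\vc\partial f)g+f(\vc\partial g)-2e_3(f_1(\partial_2g)-f_2(\partial_1g))$. Adding or subtracting the $\partial_0$ part according to $\partialbar=\partial_0+\vc\partial$ or $\partial=\partial_0-\vc\partial$ then produces the two left-hand formulas simultaneously, the sign of the $e_3$-term flipping between the two.

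The two right-hand formulas follow from the mirror computation: writing $(fg)\vc\partial=\sum_i(\partial_if)g\,e_i+f\sum_i(\partial_ig)e_i$, the clean term is now $(f\vc\partial)g=\sum_i(\partial_if)e_ig$ and the correction is $\sum_i(\partial_if)(ge_i-e_ig)$, which by the same two commutator identities applied to $g$ equals $2(g_1(\partial_2f)-g_2(\partial_1f))e_3$; combining with the $\partial_0$ part gives both right-hand identities. (Alternatively one could deduce each right rule from the corresponding left rule via $\overline{\partialbar h}=\bar h\,\partial$, $\overline{\partial h}=\bar h\,\partialbar$ and $\overline{fg}=\bar g\,\bar f$, but the direct mirror argument avoids bookkeeping with conjugates.) I expect the only real obstacle to be keeping the noncommutative order straight, specifically noting that in $\partial(fg)$ the operator's unit reaches $f$ before $g$, so the correction attaches to $f$ and $\partial_ig$, whereas in $(fg)\partial$ it reaches $g$ first, so the correction attaches to $g$ and $\partial_if$. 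Once the two commutator identities $e_1f-fe_1=2f_2e_3$ and $e_2f-fe_2=-2f_1e_3$ are established, all four formulas are pure bookkeeping.
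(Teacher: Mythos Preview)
Your argument is correct. The commutator identities $e_1f-fe_1=2f_2e_3$ and $e_2f-fe_2=-2f_1e_3$ (valid because $f$ is reduced-quaternion valued) are exactly what isolates the obstruction to the naive product rule for $\vc\partial$, and the rest is indeed bookkeeping.

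Your route is, however, genuinely different from the paper's and considerably more economical. The paper first records eight separate Leibnitz-type rules for $\vc\partial$ acting (left or right) on products where each factor is either the scalar part or the vector part of a function (its Proposition~\ref{prop:leibvec}), then expands $\partial(fg)=(\partial_0-\vc\partial)(f_0g_0+f_0\vc g+\vc f g_0+\vc f\vc g)$ term by term, invoking those rules and an auxiliary componentwise identity for $\vc f(\vc\partial\vc g)$, before reassembling everything. By contrast, you never split $f$ or $g$ into scalar and vector parts: the whole correction is captured in the single commutator $\sum_i(e_if-fe_i)(\partial_ig)$, which you evaluate in one line. The paper's decomposition has the minor advantage of making the intermediate identities (1L)--(4R) available for other purposes, but for the statement at hand your approach is shorter, more transparent about \emph{why} the $e_3$ term appears (pure noncommutativity of $e_i$ with $\vc f$), and avoids any component-by-component expansion of products like $\vc f(\vc\partial\vc g)$.
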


\begin{proof}
  First we note that
\begin{align*}
    \vc f(\vc\partial\vc g) &=
    (-f_1(\partial_1g_1+\partial_2g_2)+f_2(\partial_1g_2-\partial_2g_1))e_1\\
    &\ \ +(f_1(\partial_1g_2-\partial_2g_1)-f_2(\partial_1g_1+\partial_2g_1))e_2,
\end{align*}
so that
\begin{align*}
  \vc f(\vc\partial\vc g) -&2e_3(f_1\partial_2g-f_2\partial_1g) = \\
 & ( (f_1\partial_1g_0+f_2\partial_2g_0)+(-f_1\partial_2g_0+f_2\partial_1g_0)e_1)
  + (f_1\vc\partial(e_1g)+f_2\vc\partial(e_2g)).
\end{align*}
Applying this after using Proposition \ref{prop:leibvec} we find that
\begin{align*}
  \partial(fg) &= (\partial_0-\vc\partial)(fg)
                 = \partial_0(fg)-\vc\partial(fg)\\
  &= \partial_0(fg) -\vc\partial(f_0g_0+f_0\vc g+\vc f g_0 +\vc f\vc g)\\
  &= \partial_0(fg) -\vc\partial(f_0g_0) -\vc\partial(f_0\vc g)
    -\vc\partial(\vc f g_0)   -\vc\partial(\vc f\vc g)\\
  &= ( (\partial_0f)g+f(\partial_0g) ) \\
    &\ \ -((\vc\partial f_0)g_0+f_0(\vc\partial g_0)) \\
    &\ \  - ((\vc\partial f_0)\vc g + f_0(\vc\partial\vc g)) )   \\
  & \ \ - (\, (\vc\partial\vc f)g_0 + ((f_1\partial_1g_0+f_2\partial_2g_0) +
   (-f_1\partial_2g_0+f_2\partial_1g_0)e_3)\,) \\
    & \ \ -(\, (\vc\partial\vc f)\vc g +
     ( f_1\vc\partial(e_1\vc g) + f_2\vc\partial(e_2 \vc g)) \,) \\
  &= (\partial_0f)g-(\vc\partial f_0)g_0 -(\vc\partial f_0)\vc g
   - (\vc\partial\vc f)g_0 -(\vc\partial\vc f)\vc g \\
  &\ \  + f(\partial_0g) -f_0(\vc\partial g_0) - f_0(\vc\partial\vc g)    - \vc f(\vc\partial\vc g)+
   2e_3(f_1(\partial_2g)-f_2(\partial_1g)),
\end{align*}
which is equal to the right hand side of the first relation.  The last relation is obtained by taking the conjugate. The other two relations are proved similarly.  
\end{proof}





\newcommand{\authorlist}[1]{\textsc{#1}}
\newcommand{\booktitle}[1]{\textit{#1}}
\newcommand{\articletitle}[1]{``#1''}
\newcommand{\journalname}[1]{\textit{#1}}
\newcommand{\volnum}[1]{\textbf{#1}}

\end{document}